\newtheorem{Thm}[subsection]{Theorem}
\newtheorem{Lemma}[subsection]{Lemma}
\newtheorem{Rem}[subsection]{Remark}
\newtheorem{Prop}[subsection]{Proposition}
\numberwithin{equation}{section}
\newcommand{\ben}{\begin{enumerate}}
\newcommand{\een}{\end{enumerate}}
\newcommand{\bec}{\begin{center}}
\newcommand{\eec}{\end{center}}
\newcommand{\beq}{\begin{equation}}
\newcommand{\eeq}{\end{equation}}
\newcommand{\bdm}{\begin{displaymath}}
\newcommand{\edm}{\end{displaymath}}
\newcommand{\R}{\mathbb{R}}
\newenvironment{proofof}[1]{{\sc Proof of #1}}{\quad\lower0.05cm\hbox{$\square$}\medskip}
\newcommand{\bgp}{\bigskip}
\title[Gauge for Fractional Laplacian Schr\"{o}dinger Operators]
{Existence of the Gauge for Fractional Laplacian Schr\"{o}dinger Operators}
\author{Michael W. Frazier}
\address{Mathematics Department, University of Tennessee, Knoxville,
Tennessee 37922} \email{mfrazie3@utk.edu}
\author{Igor E. Verbitsky}
\address{Department of Mathematics, University of Missouri, Columbia, Missouri 65211}
\email{verbitskyi@missouri.edu}
\subjclass[2010]{Primary 42B20, 60J65. Secondary 81Q15}
\keywords{Fractional Laplacian, Schr\"{o}dinger equation, gauge}
\begin{document}

\begin{abstract}
Let $\Omega \subseteq \R^n$ be an open set, where $n \geq 2$. Suppose $\omega $ is a locally finite Borel measure on $\Omega$.  For $\alpha \in (0,2)$, define the fractional Laplacian $(-\triangle )^{\alpha/2}$ via the Fourier transform on $\R^n$, and let $G $ be the corresponding Green's operator of order $\alpha$ on $\Omega$. Define  $T(u) = G(u \omega).$  If $\Vert T \Vert_{L^2(\omega) \rightarrow L^2 (\omega)} <1$, we obtain a representation for the unique weak solution $u$ in the homogeneous Sobolev space $L^{\alpha/2, 2}_0 (\Omega)$ of 
  \[ (-\triangle)^{\alpha/2} u = u \omega + \nu \,\,\, \mbox{on} \,\,\, \Omega, \,\,\,  u=0 \,\,\, \mbox{on} \,\,\,  \Omega^c, \]
for $\nu$ in the dual Sobolev space $L^{-\alpha/2, 2} (\Omega)$.  If $\Omega$ is a bounded $C^{1,1}$ domain, this representation yields matching exponential upper and lower pointwise estimates for the solution when $\nu = \chi_{\Omega}$.  These estimates are used to study the existence of a solution $u_1$ (called the ``gauge") of the integral equation $u_1=1+G(u_1 \omega)$ corresponding to the problem 
\[  (-\triangle)^{\alpha/2} u = u \omega  \,\,\, \mbox{on} \,\,\,  \Omega, \,\,\, u \geq 0 \,\,\, \mbox{on} \,\,\, \Omega, \,\,\, u=1 \,\,\, \mbox{on} \,\,\, \Omega^c . \]  
\noindent{We} show that if $\Vert T \Vert <1$, then $u_1$ always exists if $0<\alpha <1$.  For $1 \leq \alpha <2$,  a solution exists if the norm of $T$ is sufficiently small. We also show that the condition $\Vert T \Vert <1$ does not imply the existence of a solution if $1 < \alpha <2$.
\end{abstract}

\maketitle \vfill

\eject

\tableofcontents

\section{Introduction}

Suppose $\Omega \subseteq \R^n$, where $n \geq 2$, is a nonempty open set (possibly the whole space), $\omega$ is a locally finite (positive) Borel measure on $\Omega$, and $\alpha \in (0, 2)$.  We consider the problems:  

\begin{equation}\label{u0equation} 
\left\{ \begin{aligned}
(-\triangle )^{\alpha/2} u & = \omega u +\nu \, \,   &\mbox{in} \, \,  \Omega,  \\
u & = 0 \, \,& \mbox{on} \, \, \Omega^c,
\end{aligned}
\right.  
\end{equation} 
where $\Omega^c = \R^n \setminus \Omega$, and
\begin{equation}\label{u1equation} 
\left\{ \begin{aligned}
(-\triangle )^{\alpha/2} u & = \omega u \, \,&  &  \mbox{in} \, \,  \Omega, \quad  u \ge 0,  \\
u & = 1 \, \, &  &\mbox{on} \, \,  \Omega^c.
\end{aligned}
\right.  
\end{equation} 

Here the fractional Laplacian $(-\triangle)^{\alpha/2}$ is the non-local operator defined in $\R^n$ via the Fourier transform by $((- \triangle)^{\alpha/2} u)^{\, \hat{}}(\xi) = |\xi|^{\alpha} \hat{u} (\xi)$, when $|\xi|^{\alpha} \hat{u} (\xi) \in \mathcal{S}^{\, \prime} (\R^n)$ (for example when $u $ belongs to the Sobolev space $H^{\alpha} (\R^n)$). For sufficiently nice $u$ (e.g., $u$ in the Schwarz class), there is a pointwise representation
\begin{equation} \label{pointwisefracLap}
 (- \triangle)^{\alpha/2} h (x) = C(\alpha, n) \,\, p.v. \int_{\R^n} \frac{h(x) - h(y)}{|x-y|^{n +\alpha}} \, dy , 
\end{equation}
see \cite{DPV}, Proposition 3.3.  See \cite{Kw} for equivalent definitions on $\R^n$. There are other interpretations of the fractional Laplacian restricted to a domain; see for example \cite{V} and \cite{MN}, where the interpretation above is called the restricted fractional Laplacian $(-\triangle_{|\Omega})^{\alpha/2}$ (\cite{V}) or the Dirichlet fractional Laplacian $(-\triangle_{\Omega})_D^{\alpha/2}$ (\cite{MN}). Problems (\ref{u0equation}) with $\nu= \chi_{\Omega}$ and (\ref{u1equation}) were considered for the classical Laplacian ($\alpha =2$) in \cite{FV}.  

The (minimal) solution $u=u_1$ to (\ref{u1equation}) 
is called \textit{the gauge function} (the \textit{Feynman-Kac gauge}, or simply the \textit{gauge}) in the probability literature. 
In the case $d \omega =q(x) \, dx$, $q \in L^1_{loc}(\Omega)$, 
it can be expressed in the form 
\[
u_1(x) = E^x\left(e^{\int_0^{\tau_\Omega} q(X_s) ds}\right), 
\]
where $X_t$ is the (scaled) Brownian motion if $\alpha=2$, or a symmetric $\alpha$-stable L\'{e}vy process starting at $x$ if $0<\alpha<2$, and $\tau_\Omega = \inf\{t > 0: \, X_t \in \Omega^c\}$. If $q$ lies in the corresponding Kato class, then the 
so-called Gauge Theorem says (see \cite{CZ}, Theorem 4.19, if $\alpha=2$, 
and \cite{BBK}, Theorem 2.9, if $0<\alpha<2$) that,  for a bounded domain $\Omega$, $u_1$ is uniformly bounded, provided 
$(\Omega, q)$ is ``gaugeable'', which is equivalent to the condition  
$||T||<1$ discussed below for general $\omega \ge 0$. 

We do not impose any conditions of Kato type on $\omega\ge 0$, and 
consequently in this general setup the gauge 
is no longer uniformly bounded. As we will show below, it is finite a.e. only under additional conditions 
in the case $1<\alpha\le 2$.

Let $G(x,y) = G^{(\alpha)}(x,y)$ be the Green's function for $(-\triangle)^{\alpha/2}$ on the domain $\Omega$, defined as in \cite{L}, Ch. IV.5. Then $G$ is a non-negative, symmetric function on $\R^n \times \R^n$.  We also denote by $G= G^{(\alpha)}$ the corresponding Green's operator of order $\alpha$, acting on a measure $\mu $ on $\Omega$ by
\begin{equation}
  G \mu (x) = \int_{\Omega} G(x,y) d \mu (y),  \,\,\, x \in \R^n. \label{defngreenpot}
\end{equation}
If $d \mu = f \, dx$, we write $Gf $ instead of $G \mu$. 

For $f \in C_0^{\infty} (\Omega)$, the equation $(-\triangle)^{\alpha/2} u =f$ on $\Omega$, with $u=0$ on $\Omega^c$, has solution $u = G f$.  By applying $G$ to both sides of the equation $(-\triangle )^{\alpha/2} u = \omega u +\nu$, we obtain the corresponding integral equation 
\begin{equation}\label{intformulation}
u = G(\omega u) + G\nu.
\end{equation}

Let $T$ be the operator  
\begin{equation} \label{defT}
Tf (x) = G(f \omega ) (x) = \int_{\Omega} G(x,y) f(y)\,  d \omega (y).
\end{equation}
Then (\ref{intformulation}) becomes $u=Tu + G\nu$, which has the formal solution 
\[   u = (I-T)^{-1} G\nu= \sum_{j=0}^{\infty} T^j G\nu  .\]
Our main assumption is that $\Vert T \Vert \equiv \Vert T \Vert_{L^2 (\omega) \rightarrow L^2 (\omega) } <1$.  This assumption is equivalent (see Lemma \ref{Tbetaequiv} below) to the existence of some $\beta <1$ such that 
\begin{equation}\label{embedd}
\Vert h \Vert_{L^2 (\omega)} \leq \beta \Vert (-\triangle)^{\alpha/2} h \Vert_{L^2 (\R^n)}  , \,\,\, \mbox{for all} \,\,\, h \in C^{\infty}_0 (\Omega).
\end{equation}

We inductively define kernels $G_j (x,y)$ on $\Omega \times \Omega$ for $j \geq 1$ by setting $G_1 = G$ and, for $j \geq 2$,
\begin{equation}
 G_j (x,y) = \int_{\Omega} G_{j-1} (x,z) G(z,y) \, d \omega (z). \label{defGj}
\end{equation}
Then $T^j f(x) = \int_{\Omega} G_j (x,y) f(y) \, d \omega (y)$ for $j \geq 1$, by an application of Fubini's theorem.  We define the fractional Green's function $\mathcal{G}$ of order $\alpha$ associated to $\omega$ and $\Omega$: 
\begin{equation}
 \mathcal{G} (x,y) = \sum_{j=1}^{\infty} G_j (x,y).  \label{defGreenfcnSchr}
 \end{equation}
and the corresponding operator $\mathcal{G} \nu (x) = \int_{\Omega} \mathcal{G} (x,y) \, d\nu (y)$.  Note that each $G_j$, and hence $\mathcal{G}$, is symmetric and nonnegative. Another use of Fubini's theorem gives $T^j G\nu (x) = \int_{\Omega} G_{j+1} (x,y) \, d\nu (y)$.  We give the name $u_0$ to our formal solution above, and note that
\begin{equation} \label{defu0}
 u_0 (x) = \sum_{j=0}^{\infty} T^j G \nu (x) = \sum_{j=1}^{\infty} \int_{\Omega} G_j (x,y) \, d\nu(y) = \mathcal{G} \nu (x). 
 \end{equation} 
If $\nu$ is a positive measure, then $u_0$ satisfies $u = Tu + G\nu$ at every point, in the sense of functions with possibly infinite values. We say $u_0$ is a \textit{pointwise} solution of (\ref{intformulation}) if $u_0 < \infty$ Lebesgue-a.e. on $\Omega$.  

The homogeneous Sobolev space  $L^{\alpha/2,2}_0 (\Omega)$ is defined, for $\alpha \in (0, 2)$, to be the closure of $C_0^{\infty} (\Omega)$ with respect to the norm 
\begin{equation}
\Vert u \Vert_{L^{\alpha/2,2}_0 (\Omega)} = \Vert (-\triangle)^{ \alpha/4} u \Vert_{L^2 (\R^n)} .  \label{defSob}
\end{equation} 
We also define $L^{-\alpha/2,2} (\Omega)$ to be the dual of $L^{\alpha/2,2}_0 (\Omega)$.

If $\Vert T \Vert <1$, then, by the Lax-Milgram Theorem, there is a unique weak solution $u \in L^{\alpha/2,2}_0 (\Omega)$ to (\ref{u0equation}) for each $\nu \in L^{- \alpha/2, 2} (\Omega)$ (see $\S 2$).  The following proposition shows that this weak solution is realized via $\mathcal{G}$, and that the condition $\Vert T \Vert <1$ on $\omega$ is close to being necessary for the existence of a solution.

\begin{Prop}\label{u0thm} Suppose $n \geq 2$ and $0<\alpha <2$.  Let $\Omega \subseteq \R^n$ be open and let $\omega$ be a (positive) Borel measure on $\Omega$. 

\vspace{0.1in}

(A) Suppose $\Vert T \Vert < 1$.  If $\nu \in L^{- \alpha/2, 2} (\Omega)$ is a (positive) measure,  
then $\mathcal{G} \nu$ is a non-negative pointwise solution to (\ref{intformulation}), and $\mathcal{G} \nu \in   L^{\alpha/2,2}_0 (\Omega)$, with 
\begin{equation} \label{normu0est}
 \Vert \mathcal{G} \nu \Vert_{L^{\alpha/2, 2}_0 (\Omega)} \leq \frac{1}{1- \Vert T \Vert}  \Vert \nu \Vert_{L^{-\alpha/2, 2} (\Omega)} . 
\end{equation}

Also, $\mathcal{G}$ extends to be a bounded operator from $L^{- \alpha/2, 2} (\Omega)$ to $L^{\alpha/2, 2}_0 (\Omega)$ with norm at most $(1-\Vert T \Vert)^{-1}$. 
For a general $\nu \in L^{-\alpha/2, 2} (\Omega)$, $\mathcal{G} \nu$ is the weak solution to (\ref{u0equation}).

\vspace{0.1in}

(B) If (\ref{intformulation}) has a non-trivial non-negative pointwise solution $u$ for some positive measure $\nu$, then $\Vert T \Vert \leq 1$.  If also $u \in L^{\alpha/2,2}_0 (\Omega)$, then $\nu \in L^{-\alpha/2,2} (\Omega)$.
\end{Prop}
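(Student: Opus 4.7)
\smallskip

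\noindent\textbf{Proof plan.} My approach to Part (A) centers on the Neumann series $\mathcal{G}\nu = \sum_{j=0}^{\infty} T^j G\nu$. All summands are non-negative, so pointwise non-negativity is immediate and monotone convergence yields $T\mathcal{G}\nu = \mathcal{G}\nu - G\nu$, giving the pointwise integral equation. For the Sobolev bound, I would invoke Lemma \ref{Tbetaequiv} in the form of the embedding $L^{\alpha/2,2}_0(\Omega) \hookrightarrow L^2(\omega)$ with operator norm $\sqrt{\Vert T \Vert}$; this is transparent from the factorization $MM^* = T$ for $M = (-\triangle)^{-\alpha/4}\vert_{\Omega} : L^2(\R^n) \to L^2(\omega)$. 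Setting $h_j = T^j G\nu = G(h_{j-1}\omega)$, the energy identity $\Vert h_j\Vert_{L^{\alpha/2,2}_0}^2 = \int h_j h_{j-1}\, d\omega$ combined with Cauchy--Schwarz and the recursive bound $\Vert h_j\Vert_{L^2(\omega)} \leq \Vert T\Vert^{j+1/2}\Vert \nu\Vert_{L^{-\alpha/2,2}}$ yields $\Vert h_j\Vert_{L^{\alpha/2,2}_0} \leq \Vert T\Vert^j \Vert \nu\Vert_{L^{-\alpha/2,2}}$; summing gives (\ref{normu0est}). Testing the pointwise equation against $\phi \in C_0^{\infty}(\Omega)$ via the $G$-$(-\triangle)^{\alpha/2}$ duality confirms that $\mathcal{G}\nu$ is the weak solution. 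For general $\nu \in L^{-\alpha/2,2}$, I would set $\mathcal{G} = (I-\tilde T)^{-1}G$ with $\tilde T$ the same kernel operator realized on $L^{\alpha/2,2}_0$ (whose norm coincides with $\Vert T\Vert$ via the factorization $\tilde T = G\circ(\omega\cdot)$), and Lax--Milgram uniqueness identifies this with the weak solution.

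For Part (B), the first claim uses a Perron--Frobenius/supersolution argument. Iterating $u = Tu + G\nu \geq Tu$ gives $u \geq T^n u$ pointwise for all $n$. Since $\nu$ is positive and the problem is non-trivial, strict positivity of the Green kernel forces $u \geq G\nu > 0$ everywhere on $\Omega$. For any $\psi \in L^2(\omega)$ with $0 \leq \psi \leq Cu$ and $\supp \psi \subset \{u \leq N\} \cap B_R$, positivity and $\omega$-symmetry of $T^n$ yield
\[
\int (T^n\psi)^2\, d\omega \leq C\int u \cdot T^n\psi\, d\omega = C\int (T^n u)\,\psi\, d\omega \leq C\int u\psi\, d\omega < \infty,
\]
a bound independent of $n$. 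By the spectral theorem for the non-negative self-adjoint operator $T$, the set $\{\psi \in L^2(\omega) : \sup_n \Vert T^n\psi\Vert_{L^2(\omega)} < \infty\}$ coincides with the range of the spectral projection $E_{[0,1]}(T)$, which is a closed subspace; truncations $\min(\phi, Nu)\,\chi_{\{1/N \leq u \leq N\}\cap B_R}$ show the admissible $\psi$ are dense in $L^2(\omega)$, forcing $E_{[0,1]}(T) = I$, i.e.\ $\Vert T\Vert \leq 1$. The second claim is then routine: finiteness of $\Vert T\Vert$ gives $L^{\alpha/2,2}_0 \hookrightarrow L^2(\omega)$, so $u \in L^2(\omega)$ and $u\omega \in L^{-\alpha/2,2}$ by duality; whence $G\nu = u - Tu \in L^{\alpha/2,2}_0$ and finally $\nu \in L^{-\alpha/2,2}$.

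The main obstacle I expect is the Perron--Frobenius step in (B), namely identifying the bounded-orbit subspace with the closed spectral range $E_{[0,1]}(T)L^2(\omega)$ and then exploiting strict positivity of $u$ on $\Omega$ (furnished by $u \geq G\nu$) to establish density of the admissible test functions. The norm-$\sqrt{\Vert T\Vert}$ embedding identity, which drives both the Part (A) estimate and the duality step in Part (B), also warrants careful verification via the $MM^*$ factorization.
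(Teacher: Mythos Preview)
Part~(A) of your plan is essentially the paper's argument. You estimate each term $h_j=T^jG\nu$ in the Neumann series and sum, whereas the paper bounds $G(u_0\omega)$ in one stroke via $(I-T)^{-1}$ on $L^2(\omega)$; both rest on the same ingredients (Lemma~\ref{Tbetaequiv} and the isometry $\|G\mu\|_{L^{\alpha/2,2}_0}=\|\mu\|_{L^{-\alpha/2,2}}$ of Lemma~\ref{Gbnded}) and land on the same constant. One inaccurate aside: the factorization $T=MM^*$ with $M=(-\triangle)^{-\alpha/4}|_\Omega:L^2(\R^n)\to L^2(\omega)$ is false on a proper subdomain, since then $MM^*g=I_\alpha(g\omega)\neq G(g\omega)$; the Green operator on $\Omega$ does not split as a square of $I_{\alpha/2}$ (the paper explicitly flags this failure of the semigroup property before Lemma~\ref{charSobNegindex}). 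The correct factorization is $T=NN^*$ with $N:L^{\alpha/2,2}_0(\Omega)\hookrightarrow L^2(\omega)$ the embedding and $N^*=G\circ(\omega\cdot)$, which is exactly your $\tilde T$ picture later. This does not damage the proof, since in the end you invoke Lemma~\ref{Tbetaequiv} rather than the factorization itself.

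For Part~(B) your spectral-projection route diverges from the paper and has a gap as written. The paper dispatches $\|T\|\le 1$ in one line via Schur's test: from $u>0$ and $Tu\le u$ pointwise, the weighted Cauchy--Schwarz inequality gives $\|Tf\|_{L^2(\omega)}\le\|f\|_{L^2(\omega)}$ for all $f$, with no spectral theory and no a priori boundedness of $T$. Your argument invokes the spectral theorem for ``the non-negative self-adjoint operator $T$'' before $T$ is known to be bounded, let alone self-adjoint; a symmetric kernel operator need not be self-adjoint on its maximal domain, so the identification of the bounded-orbit set with $E_{[0,1]}(T)L^2(\omega)$ is not yet justified. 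The gap is repairable (for instance, your $n=1$ estimate plus density can be upgraded to boundedness first, after which the spectral theorem applies), but the detour is unnecessary given Schur. Your treatment of the second claim in (B) is equivalent to the paper's: both reduce to the equivalences in Lemma~\ref{Gbnded}, the paper by extracting $\int G\nu\,d\nu<\infty$ from the finite energy of $u\omega+\nu$, you by passing through $u\in L^2(\omega)$ and $G\nu\in L^{\alpha/2,2}_0$.
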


\vspace{0.1in}

{\bf Remark.}  For $\alpha =2$, the results of Proposition \ref{u0thm} hold for $n \geq 3$ by the same methods.  For $n=2$, they hold for domains $\Omega$ with a non-trivial  non-negative Green's function.  See also \cite{DD}, \cite{FV} and the references given there for $d \omega =q \, dx$, where $q \in L^1_{loc}(\Omega)$, in bounded 
smooth domains $\Omega$. 

\vspace{0.1in}

The existence of the solution operator $\mathcal{G}$ in Proposition \ref{u0thm} and its mapping properties follow easily from the Lax-Milgram Theorem.  The specific representation of $\mathcal{G}$ and (ii) in Theorem \ref{u0thm} may be new.  This representation will allow us to use the results of \cite{FNV} to obtain pointwise estimates for the solution to (\ref{u0equation}) in the case where $\nu  = \chi_{\Omega}$, which we will then relate to equation (\ref{u1equation}).  

Although Proposition \ref{u0thm} holds for a general open set $\Omega$, for further conclusions we require some additional conditions.  First, we must have $\chi_{\Omega} \in L^{-\alpha/2,2} (\Omega)$, This condition holds whenever $|\Omega | < \infty$.  This follows from (\ref{G1equiv}) and Lemma \ref{Gbnded} below, because (\ref{G1equiv}) shows that $\int_{\Omega} G \chi_{\Omega} \, dx < \infty$. Alternately, note that $\chi_{\Omega} \in L^{q^*} (\R^n)$ for any $q^*$, in particular for $q^* = 2n/(n+\alpha)$, the conjugate index to $p^* = 2n/(n-\alpha)$.  By the Sobolev imbedding theorem,  $L^{\alpha/2, 2}_0 (\Omega)$ imbeds in $L^{p^*}$ continuously (see e.g., \cite{DPV}, Theorem 6.5), so $L^{q^*}$ imbeds continuously in the dual space $L^{-\alpha/2, 2} (\Omega)$.  

In addition, to apply \cite{FNV}, we require the Green's function of $\Omega$ to have certain properties (see $\S$3).  These properties hold if $\Omega$ is a bounded $C^{1,1}$ domain.  

\begin{Thm}\label{nu=1thm} Suppose $n \geq 2$ and $0< \alpha < 2$.  Let $\Omega \subset \R^n$ be a bounded $C^{1,1}$ domain and let $\omega$ be a (positive) Borel measure on $\Omega$. 

\vspace{0.1in}

(i) Suppose $\Vert T \Vert <1$.  For $x \in \Omega$, let $m(x) = \delta(x)^{\alpha/2}$, where $\delta (x)$ is the distance from $x$ to the boundary of $\Omega$.  Let $u_0= \mathcal{G} \chi_{\Omega}$ be the solution of (\ref{u0equation}) when $d\nu = \chi_{\Omega} \, dx$.  Then there exist constants $C= C(\Omega, \alpha, \Vert T \Vert)>0$, and $C_1= C_1 (\Omega, \alpha)>0$, such that 
\begin{equation}\label{u0upperestimate}
u_0 (x) \leq C_1 m (x) e^{C \frac{ Tm(x)}{m (x)}}, \,\,\, \mbox{for all} \,\,\, x \in \Omega.
\end{equation} 

\vspace{0.1in}

(ii) Conversely, if $u$ is any non-negative solution of (\ref{intformulation}) with $d\nu= \chi_{\Omega} \, dx$, then there exist positive constants $c= c(\Omega, \alpha)$ and $c_1 = c_1 (\Omega, \alpha)$ such that
\begin{equation}\label{ulowerestimate}
 u (x) \geq c_1 m (x) e^{c\frac{ Tm(x)}{m (x)}}, \,\,\, \mbox{for} \,\, a.e. \,\,\, x \in \Omega.
\end{equation}   
 
\end{Thm}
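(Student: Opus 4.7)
The plan is to build on Proposition \ref{u0thm} and reduce both inequalities to pointwise estimates on the scalar series $\sum_{j=0}^\infty T^j m(x)$, which are available from \cite{FNV}. From Proposition \ref{u0thm}(A), if $\Vert T \Vert < 1$ then
\[ u_0(x) = \mathcal{G}\chi_\Omega(x) = \sum_{j=0}^\infty T^j (G\chi_\Omega)(x) \]
holds pointwise. On a bounded $C^{1,1}$ domain, the classical two-sided Green's function estimates for the restricted fractional Laplacian (Chen--Song, Kulczycki) integrate to the pointwise equivalence $G\chi_\Omega(x) \asymp \delta(x)^{\alpha/2} = m(x)$, with constants depending only on $\Omega$ and $\alpha$. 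Since $T$ has a non-negative kernel and is therefore order-preserving on non-negative functions, this gives $u_0(x) \asymp \sum_{j=0}^\infty T^j m(x)$, and reduces (i) and (ii) to two-sided pointwise bounds on the scalar series.

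For part (i), I would invoke the upper pointwise estimate of \cite{FNV}, which, under the hypothesis $\Vert T \Vert < 1$ together with the Green's function properties supplied by the $C^{1,1}$ assumption, yields
\[ \sum_{j=0}^\infty T^j m(x) \leq C_1 \, m(x) \, e^{C \, Tm(x)/m(x)}, \]
where $C = C(\Omega, \alpha, \Vert T \Vert)$ and $C_1 = C_1(\Omega, \alpha)$; combined with $G\chi_\Omega \asymp m$, this is (\ref{u0upperestimate}). For part (ii), let $u$ be any non-negative pointwise solution of (\ref{intformulation}) with $\nu = \chi_\Omega$. The identity $u = Tu + G\chi_\Omega$ iterates, using only non-negativity of $T$, to give $u \geq \sum_{j=0}^N T^j (G\chi_\Omega)$ for every $N$; letting $N \to \infty$ yields $u \geq u_0$ pointwise (and forces $u_0 < \infty$ wherever $u$ is finite). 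The matching lower estimate from \cite{FNV},
\[ \sum_{j=0}^\infty T^j m(x) \geq c_1 \, m(x) \, e^{c \, Tm(x)/m(x)}, \]
obtained term-by-term from iterated-kernel lower bounds (and therefore not requiring $\Vert T \Vert < 1$), then gives (\ref{ulowerestimate}).

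The main obstacle is aligning the hypotheses: one must check that the Green's function bounds used in \cite{FNV} (quasi-metric and $3$-$G$-type inequalities, and boundary decay of Chen--Song type $G(x,y) \asymp |x-y|^{\alpha-n} \min\bigl(1, m(x) m(y)/|x-y|^\alpha\bigr)$) are all available from the $C^{1,1}$ assumption on $\Omega$, and that the combinatorial control of the iterated kernels $G_j$ provided by \cite{FNV} indeed packages into the precise exponential form $e^{C\,Tm/m}$ with constants of the stated dependence. Once this identification is in place, the reduction above delivers both (i) and (ii); in particular the lower bound extends automatically to every non-negative solution through the domination $u \geq u_0$.
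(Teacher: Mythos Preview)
Your proposal is correct and follows essentially the same route as the paper: use $G\chi_\Omega \asymp m$ (from the Chen--Song/Kulczycki estimates on a bounded $C^{1,1}$ domain) and positivity of $T$ to reduce to the two-sided bounds on $\sum_{j\ge 0} T^j m$ supplied by Corollary~3.5 of \cite{FNV}, after observing that $K(x,y)=G(x,y)/(m(x)m(y))$ is a quasi-metric kernel. Your iteration argument $u \ge \sum_{j=0}^N T^j(G\chi_\Omega)$ for any non-negative solution is exactly the minimality of $u_0$ that the paper invokes by reference to \cite{FV}, and your remark that the lower bound from \cite{FNV} does not require $\Vert T\Vert<1$ is also how the paper uses it.
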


In fact, in statement (ii), an estimate with more precise constants is proved in \cite{GV}: any non-negative solution $u$ satisfies the lower bound 
\[ u (x) \geq  s (x) e^{\frac{ Ts(x)}{s (x)}}, \,\,\, \mbox{for} \,\, a.e. \,\,\, x \in \Omega,\]
where $s(x)= G \chi_\Omega(x)\approx m(x)$.

For $d\nu = \chi_{\Omega}\, dx$, our conclusions in Proposition \ref{u0thm} and Theorem \ref{nu=1thm} are very similar to the results for $\alpha =2$ in \cite{FV}. On the other hand, our conclusions for the solution of (\ref{u1equation}) are very different from those for $\alpha =2$ in \cite{FV}. To formulate the integral equation corresponding to (\ref{u1equation}), let $v=u-1$.  Then $v$ satisfies $(-\triangle)^{\alpha/2} v =  v \omega+ \omega$ on $\Omega$, with $v=0$ on $\Omega^c$.  Applying $G$ to both sides gives $v= G(v \omega) + G\omega = Tv + G \omega$.  Therefore we consider $v= (I-T)^{-1} G \omega = \sum_{j=1}^{\infty} T^j G \omega = \mathcal{G} \omega$.  Thus the integral equation analogue of (\ref{u1equation}) is  
\begin{equation} \label{intformulationu1}
u= 1 + G(u \omega)
\end{equation}
with solution (at every point, but with values that might be $+\infty$)
\begin{equation} \label{defu1}
u_1 (x) = 1 + \mathcal{G} \omega (x) = 1 + \int_{\Omega} \mathcal{G} (x,y) \, d \omega (y),
\end{equation}
for $x \in \Omega$, and $u_1 =1$ on $\Omega^c$.  We say that $u_1$ is a pointwise solution of $u=1 + G (u \omega)$ if $u_1 < \infty$ Lebesgue-a.e. on $\Omega$.  

For $x \in \Omega$ and $y \in \Omega^c$, let $P(x,y) = P^{\alpha} (x,y)$ be the Poisson kernel of order $\alpha$ for $\Omega$ (see \S 3).

\begin{Thm}\label{u1thm} Suppose $n \geq 2$.  Let $\Omega \subset \R^n$ be a bounded $C^{1,1}$ domain and let $\omega$ be a (positive) Borel measure on $\Omega$.

\vspace{0.1in}

(i) Suppose $0 < \alpha <1$ and $\Vert T \Vert <1$.  Then $u_1 \in L^1 (\Omega, dx)$ and hence $u_1$ is a pointwise solution of (\ref{intformulationu1}).  
Moreover, $u_1-1 \in L^{\alpha/2, 2}_0 (\Omega)$.

(ii) Suppose $1 \leq \alpha <2$.  Then there exists a  constant $\gamma= \gamma (\alpha, \Omega)$  $\in (0, 1)$ such that if $\Vert T \Vert <\gamma$, then $u_1 \in L^1 (\Omega, dx)$ and hence $u_1$ is a solution of (\ref{intformulationu1}).

\vspace{0.1in}

When $0<\alpha <1$ and $\Vert T \Vert <1$, or $1 \leq \alpha <2$ and $\Vert T \Vert < \gamma$, for $\gamma$ as in (ii), there exist constants $C_3 (\Omega, \alpha), C_4 (\Omega, \alpha,  \Vert T \Vert), $ such that 
\begin{equation}\label{u1pointwiseupper}
u_1 (x) \leq C_3 \int_{\Omega^c} e^{C_4 \int_\Omega G(x,y) \frac{P(y,z)}{P(x,z)} d \omega(y)}  \,  P(x,z) \, dz.
\end{equation} 
Also, if $u$ is a non-negative solution to $u= 1 + G (u \omega)$, then there exist constants $c_3 (\Omega, \alpha)$ and $c_4(\Omega, \alpha)$ such that
\begin{equation}\label{u1pointwiselower}
u (x) \geq c_3 \int_{\Omega^c} e^{c_4 \int_\Omega G(x,y) \frac{P(y,z)}{P(x,z)} d \omega(y)}  \,  P(x,z) \, dz.
\end{equation}

\end{Thm}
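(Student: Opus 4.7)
My plan is to parallel Theorem \ref{nu=1thm}, with the Poisson kernel $P(\cdot,z)$ playing the role of the interior data $\chi_\Omega$. I would introduce the \emph{perturbed Poisson kernel}
\[
\mathcal{P}(x,z) := \sum_{j=0}^\infty T^j[P(\cdot,z)](x), \qquad x\in\Omega,\ z\in\Omega^c,
\]
satisfying formally $\mathcal{P} = P + T\mathcal{P}$. Using the fundamental identity $\int_{\Omega^c} P(x,z)\,dz = 1$ for $x\in\Omega$ and iterating the integral equation $u_1 = 1 + Tu_1$, one obtains the representation $u_1(x) = \int_{\Omega^c}\mathcal{P}(x,z)\,dz$. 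Applying the main estimates from \cite{FNV} to the $z$-parametrized data $P(\cdot,z)$ (exactly as they are applied to $\chi_\Omega$ in the proof of Theorem~\ref{nu=1thm}) then yields the matching pointwise bounds
\[
c_3\, P(x,z)\, e^{c_4 M(x,z)} \le \mathcal{P}(x,z) \le C_3\, P(x,z)\, e^{C_4 M(x,z)}, \quad M(x,z) := \int_\Omega G(x,y)\frac{P(y,z)}{P(x,z)}\,d\omega(y),
\]
with $M(x,z)$ now playing the role of $Tm/m$. Integrating over $z\in\Omega^c$ produces \eqref{u1pointwiseupper} and \eqref{u1pointwiselower} at once; the lower bound transfers to an arbitrary non-negative solution $u$ because $u_1$ is minimal among them.

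For the $L^1$ conclusion in part (i), the key observation is that for a bounded $C^{1,1}$ domain $\Omega$ and $0<\alpha<1$, the function $\chi_\Omega$ itself lies in $L^{\alpha/2,2}_0(\Omega)$: its $H^{\alpha/2}(\R^n)$ seminorm is driven by the boundary jump, which is finite precisely when $\alpha/2 < 1/2$, and $\chi_\Omega$ vanishes on $\Omega^c$. The trace inequality \eqref{embedd} then gives $\omega(\Omega) \le \beta^2\,\|\chi_\Omega\|^2_{L^{\alpha/2,2}_0} < \infty$, whence
\[
\int_\Omega G\omega\,d\omega \;=\; \int_\Omega T\chi_\Omega\,d\omega \;\le\; \|T\|\,\omega(\Omega) \;<\; \infty,
\]
so $\omega\in L^{-\alpha/2,2}(\Omega)$. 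Proposition~\ref{u0thm}(A), applied to $\nu = \omega$, now yields $u_1-1 = \mathcal{G}\omega \in L^{\alpha/2,2}_0(\Omega)$, and the Sobolev embedding $L^{\alpha/2,2}_0(\Omega) \hookrightarrow L^{2n/(n-\alpha)}(\Omega) \hookrightarrow L^1(\Omega,dx)$ closes the argument.

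For part (ii) this route is closed: when $\alpha\ge 1$, $\chi_\Omega$ need not belong to $L^{\alpha/2,2}_0(\Omega)$ and $\omega(\Omega)$ may be infinite (as the sharp fractional Hardy weight shows). Instead, I would derive the $L^1$ bound directly from \eqref{u1pointwiseupper}: integration over $x\in\Omega$ combined with Fubini and $\int_{\Omega^c} P(x,z)\,dz = 1$ reduces the problem to controlling
\[
\int_{\Omega^c}\int_\Omega e^{C_4 M(x,z)}\, P(x,z)\,dx\,dz.
\]
The plan here is to Taylor-expand the exponential and estimate each moment $\int_\Omega M(x,z)^k P(x,z)\,dx$ iteratively, using a $3G$-type inequality for the composite kernel $G(x,y)P(y,z)/P(x,z)$ (in the spirit of Bogdan's estimates underlying \cite{FNV}), with the goal of producing a bound of the form $k!(A\|T\|)^k$ for a constant $A = A(\alpha,\Omega)$. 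Choosing $\gamma := 1/(AC_4)$ then makes the resulting geometric series converge and yields $u_1\in L^1(\Omega,dx)$. This exponential-integrability step is the main technical obstacle: unlike in (i), no qualitative smallness of $\omega$ is automatic, and the threshold $\gamma<1$ cannot be dispensed with, consistent with the failure mentioned after the abstract for $1<\alpha<2$ when $\|T\|$ is merely less than $1$.
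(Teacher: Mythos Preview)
Your handling of the pointwise bounds \eqref{u1pointwiseupper}--\eqref{u1pointwiselower} and of part (i) is essentially the paper's argument. One detail you leave implicit: to apply the \cite{FNV} estimates with constants independent of $z\in\Omega^c$, you must verify that the kernel $K_z(x,y)=G(x,y)/(P(x,z)P(y,z))$ is quasi-metric with constant uniform in $z$. The paper does this via a lemma of Hansen--Netuka, writing $K_z(x,y)\approx c(z)\,d(x,z)d(y,z)/d(x,y)$ for the single quasi-metric $d(x,y)=|x-y|^{n-\alpha}(|x-y|^2+\delta(x)^2+\delta(y)^2)^{\alpha/2}$ on $\R^n$.

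For part (ii) your route departs from the paper's, and the step you flag as the ``main technical obstacle'' is a genuine gap. You would need
\[
\int_\Omega M(x,z)^k\,P(x,z)\,dx \;\le\; k!\,(A\|T\|)^k\,h(z),\qquad \int_{\Omega^c} h(z)\,dz<\infty.
\]
Already at $k=1$ this reads $\int_\Omega G1(y)\,P(y,z)\,d\omega(y)\le A\|T\|\,h(z)$, and there is no evident mechanism by which the $L^2(\omega)\!\to\!L^2(\omega)$ norm $\|T\|$ controls such an $L^1(\omega)$ quantity uniformly in $z$; a $3G$ inequality by itself does not give this. The standard moment argument you have in mind works for Kato-class $\omega$, which is precisely the hypothesis the paper avoids.

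The paper never integrates \eqref{u1pointwiseupper}. Instead it uses the duality $\int_\Omega u_1\,dx=|\Omega|+\int_\Omega u_0\,d\omega$ together with $G\phi=1$ to write $\int_\Omega u_0\,d\omega=\int_\Omega u_0\,\phi\,dx-\int_\Omega G1\cdot\phi\,dx$, reducing everything to the finiteness of $\int_\Omega u_0\,\phi\,dx$. Since $u_0\le C_1 m\,e^{C_2 Tm/m}$ (Theorem~\ref{nu=1thm}) and $\phi\lesssim m^{-2}$, H\"older's inequality with exponent $p>(2+\alpha)/(2-\alpha)$ gives
\[
\int_\Omega m^{-1}e^{C_2 Tm/m}\,dx \;\le\; \Bigl(\int_\Omega m^{-\frac{p+1}{p-1}}\,dx\Bigr)^{1-1/p}\Bigl(\int_\Omega m\,e^{C_2 p\,Tm/m}\,dx\Bigr)^{1/p}.
\]
The first factor is finite by the choice of $p$. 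For the second the paper rescales: with $\gamma=c/(C_2 p)$ (where $c$ is the constant in the \emph{lower} bound of Theorem~\ref{nu=1thm}(ii)) and $\omega_1=\gamma^{-1}\omega$, one has $C_2 p\,Tm=c\,T_1m$, and the lower bound yields $m\,e^{cT_1m/m}\le c_1^{-1}u_0^{*}$ with $u_0^{*}=\sum_j T_1^j G1\in L^1(\Omega)$ by Proposition~\ref{u0thm}, since $\|T_1\|<1$ once $\|T\|<\gamma$. The key idea you are missing is this interplay between the upper and lower exponential estimates of Theorem~\ref{nu=1thm}, bridged by a rescaling of $\omega$.
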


The next result states that in the case $1< \alpha < 2$, the condition $\Vert T \Vert <1$ is not sufficient to obtain that $u_1$ is finite a.e. (as is the case when $\alpha =2$, by the results in \cite{FV}).  From the proof, we will also see that $\gamma \rightarrow 0$ as $\alpha \rightarrow 2^-$ in Theorem \ref{u1thm}.

\begin{Thm}\label{example}  Suppose $n \geq 2$.  Let $\Omega \subset \R^n$ be a bounded Lipschitz domain. Suppose $1 < \alpha <2$.  Then there exists a positive measure $\omega \in L^1_{loc} (\Omega)$ such that $\Vert T \Vert <1$, but $u_1$ is identically $+ \infty$ on $\Omega$.
\end{Thm}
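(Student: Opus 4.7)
The plan is to construct a positive measure $\omega \in L^1_{\mathrm{loc}}(\Omega)$ with $\|T\|_{L^2(\omega) \to L^2(\omega)} < 1$ for which $u_1(x) = 1 + \mathcal{G}\omega(x) = +\infty$ at every $x \in \Omega$. A natural candidate to test is the fractional Hardy-type weight
\[
d\omega(x) \;=\; c\, \delta(x)^{-\alpha}\, dx,
\]
where $c = c(\alpha, \Omega) > 0$ is to be chosen, possibly refined to a measure concentrated in a thinner set as discussed below. The density $c\, \delta^{-\alpha}$ lies in $L^1_{\mathrm{loc}}(\Omega)$ because $\delta$ is bounded below on compact subsets of $\Omega$, but its total mass is infinite for $\alpha \geq 1$. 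The infinite-mass property is essential: if $\omega(\Omega) < \infty$, then $1 \in L^2(\omega)$ and the Neumann series $\sum_{j \geq 0} T^j 1$ converges in $L^2(\omega)$ whenever $\|T\| < 1$, which would rule out divergence of $u_1$ almost everywhere.

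To verify $\|T\| < 1$, I would invoke the fractional Hardy inequality for bounded Lipschitz domains,
\[
\int_\Omega |h(x)|^2\, \delta(x)^{-\alpha}\, dx \;\leq\; C_H(\alpha,\Omega)\, \int_{\R^n} |(-\triangle)^{\alpha/4} h(x)|^2\, dx, \quad h \in C_0^\infty(\Omega),
\]
and combine it with Lemma \ref{Tbetaequiv}. Choosing $c < 1/C_H$ yields $\|T\| \leq c\, C_H < 1$.

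The crux is to show $u_1 \equiv +\infty$. The plan is to analyze the pointwise iterates $T^j 1$, using the two-sided Green's function estimate
\[
G(x,y) \;\asymp\; |x-y|^{\alpha-n}\, \min\!\left(1, \frac{\delta(x)^{\alpha/2}\, \delta(y)^{\alpha/2}}{|x-y|^\alpha}\right).
\]
A direct computation shows that $T 1 = G\omega$ is already bounded on $\Omega$, so divergence must appear at higher iterates. I would decompose $\Omega$ into shells $S_k = \{x \in \Omega : 2^{-k-1} < \delta(x) \leq 2^{-k}\}$ and track how $T$ transports mass between shells. The boundary asymptotic $G(x,y) \asymp \delta(x)^{\alpha/2}\, \delta(y)^{\alpha/2}/|x-y|^n$ for $y$ near $\partial\Omega$ shows that, for $1 < \alpha < 2$, each application of $T$ injects into every interior shell a fixed multiple of the boundary contribution, and I expect iteration to produce a pointwise lower bound $T^j 1(x) \geq C\, \eta^j$ for some $\eta > 1$ and every $x$ in the interior of $\Omega$. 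This would force $\sum_j T^j 1(x) = +\infty$ everywhere.

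The main obstacle will be making this cascade rigorous and, in particular, reconciling the $L^2(\omega)$-contractivity of $T$ with pointwise divergence of the Neumann series. The two coexist only because $1 \notin L^2(\omega)$, but quantifying the discrepancy requires careful $L^\infty$-weighted estimates on $T^j 1$ that do not follow from the norm bound. If the distance-weighted choice above proves insufficient, I would fall back to measures concentrated near a single boundary point or supported in a thin tubular neighborhood of a codimension-one subset of $\Omega$, where fractional trace inequalities (which become nontrivial precisely for $\alpha > 1$) sharpen as $\alpha \to 2^-$ and explain the announced property $\gamma \to 0$.
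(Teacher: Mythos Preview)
Your proposal has a genuine gap. With $d\omega = c\,\delta^{-\alpha}\,dx$ and $c$ chosen small enough that $cC_H<1$, the function $T1 = c\,G(\delta^{-\alpha})$ is not merely bounded: it is bounded above by a constant $M<1$ uniformly on $\Omega$. Indeed, since $\phi(x)=A_{\alpha,n}\int_{\Omega^c}|x-z|^{-n-\alpha}\,dz$ satisfies $\phi\approx\delta^{-\alpha}$ and $G\phi=1$ (the Poisson identity (\ref{poissonid})), one has $G(\delta^{-\alpha})\approx 1$, so $T1\approx c$. Once $\sup_\Omega T1 = M<1$, positivity and linearity of $T$ force $T^j1\le M\cdot T^{j-1}1\le\cdots\le M^j$, and therefore $u_1=1+\sum_{j\ge 1}T^j1\le (1-M)^{-1}<\infty$ everywhere. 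Your expected lower bound $T^j1\ge C\eta^j$ with $\eta>1$ is thus impossible in the small-$c$ regime you need for the crude Hardy bound, and the shell-decomposition argument cannot rescue it.

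The paper's proof resolves exactly this tension by taking $d\omega=\phi\,dx$ with no shrinking constant. The point is that $T1=G\phi=1$ \emph{exactly}, so $T^j1=1$ for every $j$ and $u_1\equiv+\infty$ is immediate. The inequality $\Vert T\Vert<1$ does not come from a bare Hardy bound on $\int u^2\,d\omega$; it comes from the identity
\[
\Vert u\Vert_{L^{\alpha/2,2}_0(\Omega)}^2=\frac{A_{\alpha,n}}{2}\int_\Omega\int_\Omega\frac{|u(x)-u(y)|^2}{|x-y|^{n+\alpha}}\,dx\,dy+\int_\Omega u^2\phi\,dx,
\]
which shows that $\int u^2\phi$ is already a strict \emph{part} of the Sobolev norm. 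Dyda's Hardy inequality (valid precisely for $1<\alpha<2$) then bounds $\int u^2\phi$ by a constant times the Gagliardo piece, yielding $(1+\varepsilon)\int u^2\phi\le\Vert u\Vert^2$ for some $\varepsilon>0$, hence $\Vert T\Vert<1$ via Lemma~\ref{Tbetaequiv}. The two ingredients you need to add are therefore the exact Poisson identity $G\phi=1$ and this norm decomposition; once you have them, the iteration analysis becomes unnecessary.
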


In \cite{FV}, it was shown for $\alpha =2$ that $u_1$ is finite a.e. if  $\Vert T \Vert <1$ and  $\omega$ satisfies an additional boundary condition (the exponential integrability of the balayage of $\delta d \omega$), and conversely if there is a solution, then $\Vert T \Vert \leq 1$ and the balayage of $\delta \, d \omega$ is exponentially integrable with a different constant. No such boundary conditions on $\omega$ appear in the case $0<\alpha<2$.

\vspace{0.1in}

\noindent{{\bf Acknowledgment.}  We thank Fedor Nazarov for helpful conversations on the subject matter of this paper, especially with regard to Theorems \ref{u1thm} and \ref{example}.}

\bgp

\section{Proof of Proposition \ref{u0thm}}\label{proofofu0thm}

We start by summarizing some background.  For $0 < \alpha <n$, the function $k_{\alpha} (x) = c_{\alpha, n} |x|^{\alpha -n}$, where $c_{\alpha, n}$ is an appropriate normalization constant, has Fourier transform $\widehat{k_{\alpha}} (\xi) = |\xi|^{- \alpha}$.  On $\R^n$, the Riesz potential $I_{\alpha}$ of order $\alpha$ acts on a Borel measure $\mu$ by 
\begin{equation} \label{fracRiesz}
I_{\alpha} \mu (x) = k_{\alpha} \ast \mu (x) =  c_{\alpha, n} \int_{\R^n} \frac{d \mu (y)}{|x-y|^{n - \alpha}} .
\end{equation}
Thus $I_{\alpha}$ serves as the Green's operator $G^{(\alpha)}$ on $\R^n$, since $(-\triangle)^{\alpha/2} I_{\alpha} \varphi = \varphi$ for sufficiently nice functions $\varphi$.  Note that $I_{\alpha}$ is self-adjoint and satisfies the semi-group property:  $I_{\alpha + \beta} = I_{\alpha} I_{\beta}$ for $\alpha, \beta >0$ such that $\alpha + \beta <n$.  See e.g., \cite{AH}, $\S$ 1.2.2, for these facts.  For $0 < \alpha < n$, define the homogeneous Sobolev space 
\[ L^{\alpha/2, 2} (\R^n) = \{ u \in \mathcal{S}^{\, \prime} (\R^n): (-\triangle )^{\alpha/4} u \in L^2 (\R^n)\},  \]
with norm 
\begin{equation} \label{homogSobnorm}
\Vert u \Vert_{L^{\alpha/2, 2} (\R^n)} = \Vert (-\triangle)^{\alpha/4} u \Vert_{L^2 (\R^n)}.
\end{equation}
Note that each $u \in L^{\alpha/2, 2} (\R^n)$ can be written as $u = I_{\alpha/2} f$ for $f = (-\triangle )^{\alpha/4} u \in L^2 (\R^n)$, with $\Vert f \Vert_{L^2 (\R^n)} = \Vert u \Vert_{L^{\alpha/2, 2} (\R^n)}$.  

Also, define
\[ 
L^{-\alpha/2, 2} (\R^n) = \{ u \in \mathcal{S}^{\, \prime} (\R^n): |\xi|^{ -\alpha/2} \hat{u} (\xi) \in L^2 (\R^n) \},    
\]
with norm $\Vert u \Vert_{L^{-\alpha/2, 2} (\R^n)} = \Vert |\xi|^{-\alpha/2} \hat{u} (\xi) \Vert_{L^2 (\R^n)}$.  A Borel signed measure $\mu$ on $\R^n$ has finite $\mu$ on $\R^n$ has finite $\alpha$-energy $\int_{\R^n} I_{\alpha} \mu \, d \mu$ if and only if $\mu \in L^{-\alpha/2,2} (\R^n)$, and 
\begin{equation} \label{energy}
\int_{\R^n} I_{\alpha} \mu \, d \mu = \Vert I_{\alpha/2} \mu \Vert_{L^2 (\R^n)}^2 = \Vert \mu \Vert^2_{L^{-\alpha/2,2} (\R^n)} .
\end{equation}
For $\mu \in L^{- \alpha/2, 2} (\R^n)$, 
\begin{equation} \label{dualenergy}
\Vert I_{\alpha} \mu \Vert_{L^{\alpha/2, 2} (\R^n)} = \Vert (-\triangle )^{\alpha/4} I_{\alpha} u \Vert_{L^2 (\R^n)} =   \Vert \mu \Vert_{L^{-\alpha /2,2} (\R^n)}. 
\end{equation}
In fact, $I_{\alpha}$ maps  $L^{-\alpha/2,2} (\R^n)$ isometrically onto $L^{\alpha/2,2} (\R^n)$.

Let $L^{\alpha/2, 2 }_0 (\R^n)$ ($0 < \alpha < n$) be the homogeneous Sobolev space defined as the completion of $C^{\infty}_0 (\R^n)$ with respect to the $L^{\alpha/2, 2} (\R^n)$-norm. Then $L^{\alpha/2, 2}_0 (\Omega)$ is a closed subspace of $L^{\alpha/2, 2 }_0 (\R^n)$ with the inherited norm. Then $L^{-\alpha/2,2} (\R^n)$ is the dual space of $L^{\alpha/2,2}_0 (\R^n)$.  

For $E \subset \R^n$, we define the $\alpha$-capacity of $E$ relative to $\R^n$ by 
\[  \mbox{cap}_{\alpha} (E) = \inf \{ \Vert f \Vert_{L^2 (\R^n)}^2 : I_{\alpha/2} f \geq \chi_E, f \geq 0, f \in L^2 (\R^n) \}. \]
This capacity is sometimes denoted by $\mbox{cap}_{\alpha/2, 2} (\cdot)$, and is a 
special case of the ``non-linear'' capacity $\mbox{cap}_{s, p} (\cdot)$ defined by 
\[  \mbox{cap}_{s, p} (E) = \inf \{ \Vert f \Vert_{L^p (\R^n)}^p : I_ s f \geq \chi_E, f \geq 0, f \in L^p (\R^n) \}, \]
for $1<p<\infty$ and $0<s<n/p$. (See \cite{AH}, Sec. 2.3.)

A property holds {\em{quasi-everywhere}} (abbreviated q.e., or $\alpha$-q.e. if the value of $\alpha$ is not clear from context) if it holds except on a set of $\alpha$-capacity $0$. 

A function $f$ defined on an open set $\Omega\subset \R^n$ (or quasi-everywhere in $\R^n$) is said 
to be $\alpha$-quasicontinuous on $\Omega$ if, for every $\epsilon>0$, there exists an open set $G$ such that $\mbox{cap}_{\alpha} (G)< \epsilon$, 
and $f$ is continuous outside $G$. If $g \in L^2(\R^n)$ and $\alpha >0$ then $I_{\alpha/2} g $ is $\alpha$-quasicontinuous (in particular $I_{\alpha/2} g $ is finite $\alpha$-q.e.). Hence every function $f \in L^{\alpha/2, 2} (\R^n)$ has an $\alpha$-quasicontinuous representative $\tilde{f} = I_{\alpha/2} g$, for $g = (-\triangle )^{\alpha/4} f \in L^2 (\R^n)$. Also, if $\mu \in L^{-\alpha/2, 2} (\R^n)$ is a positive measure, then $I_{\alpha/2} \mu \in L^2 (\R^n)$, hence $I_{\alpha} \mu = I_{\alpha/2} (I_{\alpha/2} \mu) $ is $\alpha$-quasicontinuous.  Moreover, if two $\alpha$-quasicontinuous  functions coincide a.e., then they coincide $\alpha$-q.e. (see \cite{AH}, Sec. 6.1 for these facts about quasicontinuity in the case of the inhomogeneous Sobolev spaces; the proof for the homogeneous spaces is virtually the same). If $\{ f_j \}_{j=1}^{\infty} $ is a sequence of functions in $L^{\alpha/2, 2} (\R^n)  $ converging in norm to $f$, then there is a subsequence $f_{j_k}$ converging $\alpha$-q.e. to an $\alpha$-quasicontinuous representative $\tilde{f}$ of $f$ in $L^{\alpha/2, 2} (\R^n)  $. (This fact is a restatement of \cite{AH}, Proposition 2.3.8, by considering $g_j  \in L^2 (\R^n)$ such that $f_j = I_{\alpha/2} g_j$.)  In particular, if $f_j$ also converges to $f$ $\alpha$-q.e., then $f$ is $\alpha$-quasicontinuous.

Note that if $\lambda \in L^{- \alpha/2, 2} (\R^n)$ is a signed Borel measure, and $E \subset \R^n$ is a Borel set with $\mbox{cap}_{\alpha} (E)=0$, then $\lambda (E)=0$, as follows.  We may assume $\lambda$ is a positive measure. Let $\mathcal{A}$ denote the class of functions in the infimum defining $\mbox{cap}_{\alpha}$.  Then for all $f \in \mathcal{A}$,
\[  \lambda (E) \leq  \int_{\R^n} I_{\alpha/2} f \, d \lambda =   \int_{\R^n} f I_{\alpha/2} \lambda \, dx \leq   \Vert f \Vert_{L^2 (\R^n)} \Vert \lambda \Vert_{L^{- \alpha/2, 2} (\R^n)}  , \]
so taking the infimum over $f \in \mathcal{A}$ gives $\lambda (E)=0$. Observe that for $\lambda \in L^{- \alpha/2, 2} (\R^n)$ a signed measure and $f \in L^{\alpha/2, 2}_0 (\R^n)$, the quantity $\int_{\R^n} f \, d\lambda$ is not well-defined, because changing $f$ on a set of measure $0$ but positive $\alpha$-capacity may change the integral.  However, if $|\lambda| \in L^{- \alpha/2, 2} (\R^n)$ and $\tilde{f}$ an $\alpha$-quasicontinuous representative of the equivalence class of $f$ in $L^{\alpha/2, 2}_0 (\R^n)$, then $\int_{\R^n} \tilde{f} \, d\lambda$ is well-defined (i.e., independent of the choice of $\alpha$-quasicontinuous representative) and the duality pairing between $\lambda$ and $f$ is $\langle \lambda, f\rangle = \int_{\R^n} \tilde{f} \, d\lambda$ (see \cite{AH}, Sec. 7.1, equation (7.1.2)).

We also define the capacity of a compact set $E \subseteq \Omega$ relative to $\Omega$: 
\[  \mbox{cap}_{\alpha} (E, \Omega) = \inf \{ \Vert u \Vert_{L^{\alpha/2, 2}_0 (\Omega)}^2 : u \geq 1 \,\, \mbox{on} \,\, E, u \in C^{\infty}_0 (\Omega) \}.  \]
In the case $\Omega=\R^n$, we have $\mbox{cap}_{\alpha} (E, \R^n) =\mbox{cap}_{\alpha} (E)$ (\cite{AH}, Proposition 2.3.13).  For an open set $G\subset \R^n$, we set $\mbox{cap}_{\alpha} (G, \Omega) = \sup \{ \mbox{cap}_{\alpha} (E, \Omega): \, E \subset G, E \,\,\, \mbox{compact} \}$.    

The following is a dual form of Deny's Theorem (\cite{D}; also see Theorem 9.1.7 in \cite{AH}.) 

\begin{Lemma} \label{DenyCorollary}
Suppose $f \in  L^{\alpha/2, 2} (\R^n) $ and $f$ has an $\alpha$-quasicontinuous representative $\tilde{f}$ such that $\tilde{f} =0$ $\alpha$-q.e. on $\Omega^c$.  Then $f \in L^{\alpha/2, 2}_0 (\Omega)$.  
\end{Lemma}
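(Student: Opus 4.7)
The plan is to use the Hilbert space structure of $H:=L^{\alpha/2,2}(\R^n)$ together with orthogonal projection. Since $L^{\alpha/2,2}_0(\Omega)$ is a closed subspace of $H$, it suffices to show $\langle f,h\rangle_H=0$ for every $h$ in the orthogonal complement $L^{\alpha/2,2}_0(\Omega)^\perp$, where $H$ carries the inner product $\langle u,v\rangle_H=\int_{\R^n}(-\triangle)^{\alpha/4}u\cdot(-\triangle)^{\alpha/4}v\,dx$; this would force the projection of $f$ onto $L^{\alpha/2,2}_0(\Omega)^\perp$ to vanish and hence $f\in L^{\alpha/2,2}_0(\Omega)$.

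I first characterize the complement. For $\varphi\in C_0^\infty(\Omega)$ and $h\in H$, $\langle h,\varphi\rangle_H=\langle(-\triangle)^{\alpha/2}h,\varphi\rangle$ interpreted as a distributional pairing, so $h\in L^{\alpha/2,2}_0(\Omega)^\perp$ if and only if the distribution $\mu:=(-\triangle)^{\alpha/2}h\in L^{-\alpha/2,2}(\R^n)$ satisfies $\supp\mu\subseteq\Omega^c$. For such $h$ we obtain $\langle f,h\rangle_H=\langle\mu,f\rangle$, the $L^{-\alpha/2,2}$--$L^{\alpha/2,2}_0$ duality pairing. In the favorable case where $\mu$ is a signed measure with $|\mu|\in L^{-\alpha/2,2}(\R^n)$, the identity from \cite{AH}, Sec.~7.1, recalled in the paragraph preceding this lemma, gives $\langle\mu,f\rangle=\int_{\R^n}\tilde f\,d\mu$; this vanishes because $\tilde f=0$ $\alpha$-q.e.\ on $\supp\mu\subseteq\Omega^c$ and sets of $\alpha$-capacity zero are $|\mu|$-null.

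The main obstacle is that $\mu$ is in general a genuine distribution, not a measure whose total variation has finite $\alpha$-energy, so the above integral representation is not directly available. To bypass this I would approximate $f$ directly. Using the $\alpha$-quasicontinuity of $\tilde f$ together with $\tilde f=0$ $\alpha$-q.e.\ on $\Omega^c$, for each $\epsilon>0$ one finds an open set $U$ with $\mbox{cap}_\alpha(U)<\epsilon$ such that $\tilde f$ is continuous on $\R^n\setminus U$ and vanishes on $\Omega^c\setminus U$. The truncation $\tilde f_\epsilon:=\mathrm{sgn}(\tilde f)(|\tilde f|-\epsilon)_+$ converges to $\tilde f$ in $H$ by the contraction property of the Sobolev norm (valid for $0<\alpha<2$), and on $\R^n\setminus U$ its support is contained in a closed subset of $\Omega$. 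A smooth cutoff adapted to $\Omega\setminus U$ followed by a standard mollification then produces $C_0^\infty(\Omega)$ approximants to $\tilde f_\epsilon$ in $H$-norm, and a diagonal argument completes the proof. This is essentially the argument of \cite{D} and \cite{AH}, Theorem~9.1.7, whose technical core is the capacitary separation of $\{\tilde f\neq 0\}$ from $\Omega^c$.
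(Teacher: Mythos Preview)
Your orthogonality/duality setup and the treatment of the case where $\mu=(-\triangle)^{\alpha/2}h$ is a signed measure of finite energy coincide with the paper's argument (the paper phrases it via Hahn--Banach rather than orthogonal projection, but the two are equivalent here). The divergence is in how general distributions $\mu$ supported in $\Omega^c$ are handled. The paper closes the gap in one line by invoking Deny's density theorem: every $\mu\in L^{-\alpha/2,2}(\R^n)$ with $\supp\mu\subseteq\Omega^c$ is an $L^{-\alpha/2,2}$-limit of linear combinations of positive measures in $L^{-\alpha/2,2}(\R^n)$ supported in $\Omega^c$; since $\langle\mu,f\rangle=0$ for each such measure, the pairing vanishes in the limit, and Hahn--Banach finishes.

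You instead abandon the dual side and propose to approximate $f$ directly. The strategy is legitimate, but your sketch has a real gap at the cutoff step. The truncation $\tilde f_\epsilon$ may well be nonzero on $U\cap\Omega^c$, and a ``smooth cutoff adapted to $\Omega\setminus U$'' cannot remove that part: $U$ is an arbitrary open set of small capacity, with no metric separation from $\Omega^c$ that a smooth function could exploit. What is actually needed is a Sobolev cutoff built from the capacitary potential of $U$, together with a multiplier estimate for the product $\tilde f_\epsilon\cdot(1-w_U)_+$ in the fractional norm; neither is addressed. Your closing appeal to \cite{AH}, Theorem~9.1.7, is effectively a citation of the very statement being proved (in its inhomogeneous form), whereas the paper's use of Deny's approximation-by-measures theorem brings in a genuinely distinct ingredient.
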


\begin{proof}
Let $f$ be as in the assumptions and let $\mu \in L^{-\alpha/2, 2} (\R^n)$ be a (positive) measure with supp $\mu \subseteq \Omega^c$.  Then $\langle f, \mu \rangle = \int_{\R^n} \tilde{f} \, d\mu =0$, where here $\langle , \rangle$ denotes the pairing between $L^{\alpha/2, 2} (\R^n)$ and its dual $L^{-\alpha/2, 2} (\R^n)$.  By a theorem of Deny (\cite{D}, p. 143, or see \cite{AH}, Corollary 9.1.7 and the remarks in Section 9.13) any distribution $T \in L^{-\alpha/2, 2} (\R^n)$ with support in $\Omega^c$ can be approximated in $L^{-\alpha/2, 2} (\R^n)$ by 
linear combinations of positive measures in $L^{-\alpha/2, 2} (\R^n)$, with support in $\Omega^c$. Then $\langle f, T \rangle =0$ for all such $T$.  Every such $T$ vanishes on $C^{\infty}_0 (\Omega)$ by the support assumption, and hence vanishes on the closure $L^{\alpha/2,2}_0 (\Omega)$.  By the Hahn-Banach theorem, if $f \not\in L^{\alpha/2, 2}_0 (\Omega)$, there would be a $T \in L^{-\alpha/2, 2} (\R^n)$ vanishing on $L^{\alpha/2, 2}_0 (\Omega)$ but not on $f$, which we have seen is impossible.  Hence $f \in L^{\alpha/2, 2}_0 (\Omega)$. 
\end{proof}

For $\mu$ a finite positive measure on $\Omega$, there exists a positive measure $\mu^{\, \prime}$, called the balayage of $\mu$ onto $\Omega^c$, such that $\mu^{\, \prime}$ is supported in $\Omega^c$,
\begin{equation} \label{bal1}
I_{\alpha} \mu (x) \geq I_{\alpha} \mu^{\, \prime} (x) , \,\, \mbox{for all} \,\,\, x \in \R^n, 
\end{equation}
\begin{equation} \label{bal2}
I_{\alpha} \mu (x) =I_{\alpha} \mu^{\, \prime} (x) , \,\, \alpha-q.e.  \,\, \mbox{on} \,\, \Omega^c, 
\end{equation} 
and
\begin{equation}\label{balayagedef}
G \mu (x) = I_{\alpha} \mu (x) - I_{\alpha} \mu^{\, \prime} (x). 
\end{equation}
(See \cite{L}, Sec. IV.6, no. 24-25, and Sec. V.1, no. 2). Also $\mu^{\, \prime} $ is a finite measure with $\mu^{\, \prime}(\R^n) \le \mu (\Omega)$; this fact follows from \cite{L}, equation (4.5.5), and p. 263, lines 11 and 13.  By (\ref{bal2}), $G\mu =0 \alpha$-q.e. on $\Omega^c$.   If $\mu \in L^{- \alpha/2, 2} (\R^n)$ then $\mu^{\, \prime} \in  L^{- \alpha/2, 2} (\R^n)$, since  
\[ \int_{\R^n} I_{\alpha} \mu^{\, \prime} \,d \mu^{\, \prime} \leq \int_{\R^n} I_{\alpha} \mu \,d \mu^{\, \prime} = \int_{\R^n} I_{\alpha} \mu^{\, \prime} \,d \mu \leq \int_{\R^n} I_{\alpha} \mu\,d \mu < \infty,\]
by (\ref{energy})-(\ref{bal1}).  Hence if $\mu \in L^{-\alpha/2, 2} (\R^n)$, then $ I_{\alpha} \mu$ and $I_{\alpha} \mu^{\, \prime}$ are  $\alpha$-quasicontinuous, so $G \mu$ is $\alpha$-quasicontinuous. 

Most of the conclusions of Lemmas \ref{finiteenergy}-\ref{Gbnded} are more or less implicit in \cite{L}, but in somewhat different language from what we require.

\begin{Lemma} \label{finiteenergy}
Suppose $\mu \in L^{-\alpha/2, 2} (\R^n)$ is a finite positive measure on $\Omega$. Then 
$G \mu \in L^{\alpha/2, 2}_0 (\Omega)$, $\int_{\Omega} G \mu \, d \mu < \infty$, and

\begin{equation} \label{Gmuidentity}
 \Vert G \mu \Vert_{L^{\alpha/2, 2}_0 (\Omega)}^2 = \int_{\Omega} G \mu \, d \mu .  
 \end{equation}
\end{Lemma}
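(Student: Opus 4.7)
The strategy is to exploit the decomposition $G\mu = I_\alpha \mu - I_\alpha \mu'$ from (\ref{balayagedef}) together with the isometry $I_\alpha : L^{-\alpha/2,2}(\R^n) \to L^{\alpha/2,2}(\R^n)$ recorded in (\ref{dualenergy}), and then locate $G\mu$ inside the subspace $L^{\alpha/2,2}_0(\Omega)$ by way of Lemma \ref{DenyCorollary}.

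First, since $\mu \in L^{-\alpha/2,2}(\R^n)$, the discussion preceding the lemma gives $\mu' \in L^{-\alpha/2,2}(\R^n)$, and both $I_\alpha \mu$ and $I_\alpha \mu'$ lie in $L^{\alpha/2,2}(\R^n)$ with quasicontinuous representatives. Hence $G\mu = I_\alpha\mu - I_\alpha \mu' \in L^{\alpha/2,2}(\R^n)$ and is $\alpha$-quasicontinuous. By (\ref{bal2}), this quasicontinuous representative vanishes $\alpha$-q.e. on $\Omega^c$, so Lemma \ref{DenyCorollary} yields $G\mu \in L^{\alpha/2,2}_0(\Omega)$.

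For the energy identity, the isometry property gives
\[
\|G\mu\|_{L^{\alpha/2,2}_0(\Omega)}^2 = \|I_\alpha(\mu-\mu')\|_{L^{\alpha/2,2}(\R^n)}^2 = \|\mu - \mu'\|_{L^{-\alpha/2,2}(\R^n)}^2.
\]
Expanding via (\ref{energy}) and recognizing $I_\alpha(\mu - \mu') = G\mu$,
\[
\|\mu-\mu'\|_{L^{-\alpha/2,2}(\R^n)}^2 = \int_{\R^n} I_\alpha(\mu-\mu')\, d(\mu-\mu') = \int_{\R^n} \tilde{G\mu}\, d\mu - \int_{\R^n} \tilde{G\mu}\, d\mu',
\]
where $\tilde{G\mu}$ is the quasicontinuous representative and the integrals are interpreted via the duality pairing discussed in the excerpt. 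The second term vanishes: $\mu'$ is supported in $\Omega^c$, $\mu' \in L^{-\alpha/2,2}(\R^n)$ so it assigns zero mass to sets of $\alpha$-capacity zero, and $\tilde{G\mu} = 0$ $\alpha$-q.e. on $\Omega^c$. Since $\mu$ is supported in $\Omega$, the first term equals $\int_\Omega G\mu\, d\mu$, giving the claimed identity.

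Finiteness of $\int_\Omega G\mu\, d\mu$ follows either as an immediate consequence of this identity together with the finiteness of $\|\mu\|_{L^{-\alpha/2,2}(\R^n)}^2$, or directly from the bound $\int_\Omega G\mu\, d\mu \le \int_{\R^n} I_\alpha\mu\, d\mu = \|\mu\|_{L^{-\alpha/2,2}(\R^n)}^2 < \infty$ via (\ref{bal1}) and (\ref{energy}). The only delicate point is the justification that $\int \tilde{G\mu}\, d\mu' = 0$; this rests on the fact that $\mu' \in L^{-\alpha/2,2}(\R^n)$ does not charge polar sets, which is precisely the content of the displayed capacity estimate in the paragraph following Lemma \ref{DenyCorollary} applied to $\lambda = \mu'$.
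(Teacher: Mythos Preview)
Your proof is correct and follows essentially the same route as the paper: use the balayage decomposition $G\mu = I_\alpha(\mu-\mu')$, invoke quasicontinuity plus Lemma~\ref{DenyCorollary} to place $G\mu$ in $L^{\alpha/2,2}_0(\Omega)$, and compute $\|G\mu\|^2 = \|\mu-\mu'\|_{L^{-\alpha/2,2}}^2 = \int G\mu\, d(\mu-\mu') = \int_\Omega G\mu\, d\mu$ using (\ref{bal2}) and the fact that $\mu'\in L^{-\alpha/2,2}(\R^n)$ does not charge sets of zero $\alpha$-capacity. The only difference is cosmetic: the paper runs the chain of equalities starting from $\int_\Omega G\mu\, d\mu$, while you start from $\|G\mu\|^2$, and you spell out the vanishing of $\int \tilde{G\mu}\, d\mu'$ a bit more explicitly.
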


\begin{proof}  Since $\mu, \mu^{\, \prime} \in L^{-\alpha/2, 2} (\R^n)$ and  $I_{\alpha}$ maps $L^{-\alpha/2, 2} (\R^n)$ into $L^{\alpha/2, 2} (\R^n)$, we have $G\mu \in L^{\alpha/2, 2} (\R^n)$ by (\ref{balayagedef}).  Then $G \mu \in L^{\alpha/2, 2}_0 (\Omega)$ by (\ref{bal2}), (\ref{balayagedef}),  and Lemma \ref{DenyCorollary}, because $G \mu$ is quasicontinuous as noted above.  Applying (\ref{bal2})-(\ref{balayagedef}), (\ref{energy}), and the fact that $I_{\alpha}$ maps $L^{- \alpha/2, 2} (\R^n)$ isometrically to $L^{ \alpha/2, 2} (\R^n)$, we have 
\[  \int_{\Omega} G \mu \, d \mu = \int_{\R^n} I_{\alpha} (\mu - \mu^{\, \prime}) \, d \mu = \int_{\R^n} I_{\alpha} (\mu - \mu^{\, \prime}) \, d (\mu - \mu^{\, \prime}) \]
\[  = \Vert \mu - \mu^{\, \prime} \Vert^2_{L^{- \alpha/2, 2} (\R^n)} = \Vert I_{\alpha} (\mu - \mu^{\, \prime}) \Vert^2_{L^{ \alpha/2, 2} (\R^n)} = \Vert G \mu \Vert_{L^{\alpha/2, 2} (\Omega)}^2. \]
\end{proof}

The following lemma is an analogue of (\ref{energy}) for $\Omega$.  The proof is more complicated because $G^{(\alpha)}$ does not satisfy a semi-group property.   

\begin{Lemma} \label{charSobNegindex}  Let $\mu$ be a finite positive Borel measure which is compactly supported in $\Omega$.  Let $\alpha \in (0,2)$ and let $G = G^{(\alpha)}$ be the Green's operator of order $\alpha$.  Then the following are equivalent:

(i) $\mu \in L^{- \alpha/2, 2} (\R^n)$; 

(ii) $\mu \in L^{- \alpha/2, 2} (\Omega)$;

(iii) $\int_{\Omega} G \mu \, d \mu < \infty$.

\noindent{If} these conditions hold, then 

\begin{equation} \label{negativeindex}
\int_{\Omega} G \mu \, d \mu = \Vert \mu \Vert_{L^{- \alpha/2, 2}(\Omega)}^2.
\end{equation}
\end{Lemma}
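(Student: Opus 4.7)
\begin{proofof}{Lemma \ref{charSobNegindex} (plan)}
The plan is to prove the chain $(i) \Rightarrow (ii) \Rightarrow (iii) \Rightarrow (i)$ and simultaneously track the norms so that the identity (\ref{negativeindex}) falls out from the final step via Lemma \ref{finiteenergy} and a Riesz representation argument. The compact support of $\mu$ in $\Omega$ will be used crucially in two places: to handle the growth/regularity of $I_\alpha\mu'$, and to control the action of smooth cutoffs.

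First, $(i) \Rightarrow (ii)$ is immediate because $L^{\alpha/2,2}_0(\Omega)$ sits as a closed subspace of $L^{\alpha/2,2}_0(\R^n) = L^{\alpha/2,2}(\R^n)$ with the inherited norm, so restriction of functionals gives a contraction $L^{-\alpha/2,2}(\R^n) \to L^{-\alpha/2,2}(\Omega)$. The implication $(i) \Rightarrow (iii)$ is part of Lemma \ref{finiteenergy}. For $(ii) \Rightarrow (i)$, I would pick $\eta \in C_0^{\infty}(\Omega)$ with $\eta \equiv 1$ on $\supp \mu$ and write, for any $\varphi \in C_0^{\infty}(\R^n)$, $\int \varphi \, d\mu = \int \eta \varphi \, d\mu$, so that
\[
\Bigl|\int \varphi \, d\mu\Bigr| \le \Vert \mu \Vert_{L^{-\alpha/2,2}(\Omega)} \Vert \eta \varphi \Vert_{L^{\alpha/2,2}_0(\Omega)} \le C(\eta) \Vert \mu \Vert_{L^{-\alpha/2,2}(\Omega)} \Vert \varphi \Vert_{L^{\alpha/2,2}(\R^n)}.
\]
The bound $\Vert \eta \varphi \Vert_{L^{\alpha/2,2}(\R^n)} \le C(\eta) \Vert \varphi \Vert_{L^{\alpha/2,2}(\R^n)}$ holds since $\alpha/2 \in (0,1)$, where multiplication by a compactly supported $C^{\infty}$ function is bounded on the fractional Sobolev space of that order (by the Kato--Ponce / fractional Leibniz inequality, or direct estimation of the Gagliardo seminorm via $|\eta(x) - \eta(y)| \le \Vert \nabla \eta\Vert_\infty |x-y|$).

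For $(iii) \Rightarrow (i)$, this is where the lack of a semigroup property for $G^{(\alpha)}$ shows up, and I think it is the main obstacle. The plan is to use the balayage decomposition (\ref{balayagedef}): since $\mu$ is a finite positive measure on $\Omega$, $\mu'$ is a finite positive measure supported in $\Omega^c$ with $\mu'(\R^n) \le \mu(\Omega) < \infty$, and
\[
\int_\Omega I_\alpha \mu \, d\mu = \int_\Omega G\mu \, d\mu + \int_\Omega I_\alpha \mu' \, d\mu.
\]
The first term on the right is finite by hypothesis. For the second, because $\supp \mu$ is a compact subset of $\Omega$, the distance $d := \mathrm{dist}(\supp \mu, \Omega^c)$ is strictly positive, so for $x \in \supp \mu$,
\[
I_\alpha \mu'(x) = c_{\alpha,n} \int_{\Omega^c} \frac{d\mu'(y)}{|x-y|^{n-\alpha}} \le \frac{c_{\alpha,n}\, \mu'(\R^n)}{d^{\,n-\alpha}} < \infty,
\]
so $\int_\Omega I_\alpha \mu' \, d\mu < \infty$. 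Hence $\int_{\R^n} I_\alpha \mu \, d\mu < \infty$, and by (\ref{energy}) this gives $\mu \in L^{-\alpha/2,2}(\R^n)$.

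Finally, for the identity (\ref{negativeindex}), Lemma \ref{finiteenergy} already gives $\int_\Omega G\mu\, d\mu = \Vert G\mu\Vert_{L^{\alpha/2,2}_0(\Omega)}^2$, so it remains to show $\Vert \mu \Vert_{L^{-\alpha/2,2}(\Omega)} = \Vert G\mu \Vert_{L^{\alpha/2,2}_0(\Omega)}$. The plan is to show $G\mu$ is the Riesz representative of $\mu$ in $L^{\alpha/2,2}_0(\Omega)$: for $\varphi \in C_0^{\infty}(\Omega)$, Parseval in the form $\langle \lambda, \varphi\rangle_{\R^n} = \langle I_\alpha \lambda, \varphi\rangle_{L^{\alpha/2,2}(\R^n)}$ applied to $\lambda := \mu - \mu' \in L^{-\alpha/2,2}(\R^n)$ yields
\[
\int \varphi\, d\mu = \langle \mu - \mu', \varphi\rangle_{\R^n} = \langle I_\alpha(\mu - \mu'), \varphi\rangle_{L^{\alpha/2,2}(\R^n)} = \langle G\mu, \varphi\rangle_{L^{\alpha/2,2}_0(\Omega)},
\]
where we used $\varphi \equiv 0$ on $\supp \mu' \subset \Omega^c$ in the first equality and (\ref{balayagedef}) in the last. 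By density of $C_0^{\infty}(\Omega)$ in $L^{\alpha/2,2}_0(\Omega)$ (taking quasicontinuous representatives on the left to make the pairing with $\mu$ well-defined, as discussed in Section 2), the Riesz representation theorem gives $\Vert \mu\Vert_{L^{-\alpha/2,2}(\Omega)} = \Vert G\mu\Vert_{L^{\alpha/2,2}_0(\Omega)}$, and squaring combined with Lemma \ref{finiteenergy} yields (\ref{negativeindex}).
\end{proofof}
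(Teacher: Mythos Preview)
Your argument is correct and follows the same overall architecture as the paper (the chain $(i)\Rightarrow(ii)\Rightarrow(i)$ via a cutoff, and $(iii)\Rightarrow(i)$ via the balayage decomposition $I_\alpha\mu = G\mu + I_\alpha\mu'$), but two of the steps are executed differently, and in both cases your route is the more economical one.

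For $(ii)\Rightarrow(i)$, the paper justifies the multiplier bound $\Vert \gamma\varphi\Vert_{L^{\alpha/2,2}(\R^n)} \le C_\gamma \Vert \varphi\Vert_{L^{\alpha/2,2}(\R^n)}$ via Plancherel and the boundedness of the Hardy--Littlewood maximal function on $L^2(|\xi|^\alpha\,d\xi)$ (using that $|\xi|^\alpha$ is an $A_2$ weight); your Gagliardo-seminorm / fractional Leibniz justification is equally valid and avoids the detour through weighted theory. For $(iii)\Rightarrow(i)$, the paper bounds $\int I_\alpha\mu\,d\mu'$ by fixing an auxiliary point $x_0$ with $I_\alpha\mu(x_0)<\infty$ and comparing $|x-y|\approx|x_0-y|$ uniformly for $x\in\supp\mu$, $y\in\Omega^c$; your direct bound $I_\alpha\mu'(x)\le c_{\alpha,n}\,\mu'(\R^n)/d^{\,n-\alpha}$ on $\supp\mu$ is simpler, though it does lean on the fact $\mu'(\R^n)\le\mu(\Omega)<\infty$ (stated in the paper just before (\ref{balayagedef})), whereas the paper's version needs only $I_\alpha\mu'\le I_\alpha\mu$.

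The most substantive difference is in the identity (\ref{negativeindex}). The paper proceeds by showing $\int_\Omega G\mu\,d\mu = \Vert \mu-\mu'\Vert_{L^{-\alpha/2,2}(\R^n)}^2$, then proving that $\mu'$ is the minimizer of $\Vert \mu-\lambda\Vert_{L^{-\alpha/2,2}(\R^n)}^2$ over all $\lambda\in L^{-\alpha/2,2}(\R^n)$ supported in $\Omega^c$ (first over measures, then extended to distributions via Deny's theorem), and finally invoking Hahn--Banach to identify that infimum with the quotient norm $\Vert\mu\Vert_{L^{-\alpha/2,2}(\Omega)}^2$. Your Riesz-representation approach---verifying directly that $\langle\mu,\varphi\rangle = \langle G\mu,\varphi\rangle_{L^{\alpha/2,2}_0(\Omega)}$ on $C_0^\infty(\Omega)$ via the Parseval-type identity and (\ref{balayagedef}), so that $G\mu$ is the Riesz representative of $\mu$---bypasses the extremality argument and Deny's theorem entirely, and links up immediately with (\ref{Gmuidentity}). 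The paper's approach has the merit of making the projection structure (balayage as nearest point in the subspace of $\Omega^c$-supported elements) explicit, which is of independent interest; yours gets to the conclusion with fewer ingredients.
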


\begin{proof}  Since $L^{\alpha/2, 2}_0 (\Omega) \subseteq L^{\alpha/2, 2}_0 (\R^n)$, (i) implies (ii).  To show that (ii) implies (i), suppose $\mu \in L^{- \alpha/2, 2} (\Omega)$.  Let $\varphi \in C^{\infty}_0 (\R^n)$.  Let $\gamma \in C^{\infty}_0 (\Omega)$ satisfy $\gamma =1$ on the support of $\mu$.  Then by Plancherel's theorem,
\[  \Vert \varphi \gamma \Vert_{L^{\alpha/2, 2} (\R^n)} = \Vert |\xi|^{\alpha/2} \hat{\varphi} \ast \hat{\gamma}  \Vert_{L^2 (\R^n)} \leq c_{\gamma} \Vert |\xi|^{\alpha/2} M ( \hat{\varphi} )(\xi) \Vert_{L^2 (\R^n)} \]
\[ \leq c_{\gamma, \alpha, n} \Vert |\xi|^{\alpha/2}\hat{\varphi} (\xi) \Vert_{L^2 (\R^n)} = c_{\gamma, \alpha, n} \Vert \varphi \Vert_{L^{\alpha/2, 2} (\R^n)}, \] 
where $M$ is the Hardy-Littlewood maximal function (see e.g., \cite{S}, p. 63).  The second inequality holds because the maximal function is bounded on $L^2 (\R^n, |\xi|^{\alpha} \, d\xi)$ because $|\xi|^{\alpha}$ is an $A_2$-weight, since $\alpha <n$ (see \cite{M}).  Since $\varphi \gamma = \varphi$ on the support of $\mu$, and $\varphi \gamma \in C^{\infty}_0 (\Omega)$,
\[  \left| \int_{\R^n} \varphi \, d \mu \right| = \left| \int_{\R^n} \varphi \gamma \, d \mu \right| \leq \Vert \mu \Vert_{L^{- \alpha/2, 2} (\Omega)} \Vert \varphi \gamma \Vert_{L^{\alpha/2, 2}_0 (\R^n)} \]
\[ \leq c_{\gamma, \alpha, n} \Vert \mu \Vert_{L^{- \alpha/2, 2} (\Omega)} \Vert \varphi  \Vert_{L^{\alpha/2, 2}_0 (\R^n)} . \]
Hence $\mu$ extends from the dense subspace $C^{\infty}_0 (\R^n)$ to define a bounded linear functional on $L^{\alpha/2, 2}_0 (\R^n)$.  Thus (i) holds.

If $\mu \in L^{- \alpha/2, 2} (\R^n)$, then by (\ref{balayagedef}) and (\ref{energy}),
\[ \int_{\Omega} G \mu \, d \mu \leq \int_{\R^n} I_{\alpha} \mu \, d \mu = \Vert \mu \Vert_{L^{- \alpha/2, 2} (\R^n)}^2 . \]
Hence (i) implies (iii).

To show that (iii) implies (i), assume that $\int_{\Omega} G \mu \, d \mu < \infty$.  We  show that $\int_{\R^n} I_{\alpha} \mu \, d \mu < \infty$, hence, by (\ref{energy}), $\mu \in L^{- \alpha/2, 2} (\R^n)$.  By (\ref{bal1}) and  (\ref{balayagedef}), 
\[ \int_{\R^n} I_{\alpha} \mu \, d \mu =  \int_{\Omega}  G\mu \, d \mu + \int_{\Omega} I_{\alpha} \mu^{\, \prime} \, d \mu , \]
so it suffices to show that $\int_{\R^n} I_{\alpha} \mu \, d \mu^{\, \prime}  = \int_{\R^n} I_{\alpha} \mu^{\, \prime} \, d \mu  < \infty$. Let $K$ denote the support of $\mu$ and let $O $ be an open set such that $K \subseteq O$ and $\overline{O} \subseteq \Omega$. Select a point $x_0 \in O$ such that $I_{\alpha} \mu (x_0) < \infty$; such a point exists because $I_{\alpha} \mu $ is finite a.e. (\cite{L}, p. 61).  Since the distance between $O$ and $\Omega^c$ is positive, then for $x \in O$ and $y \in \Omega^c$, we have $|x-y| \approx |x_0 - y|$ with constants uniform over $x,y$, and hence there is a constant $C$ such that for all $y \in \Omega^c$,
\[  I_{\alpha} \mu (y) = c_{n, \alpha} \int_K \frac{d \mu (x)}{|x-y|^{n- \alpha} } \leq C \int_K \frac{d \mu (x)}{|x_0-y|^{n- \alpha} } = \frac{C \mu(K)}{|x_0 -y|^{n - \alpha}}. \]
Hence, using (\ref{bal1}) and the fact that $\mu^{\, \prime}$ is supported in $\Omega^c$, 
\[  \int_{\R^n} I_{\alpha} \mu \, d \mu^{\, \prime} \leq C \mu (K) \int_{\R^n} \frac{d \mu^{\, \prime} (y)}{|x_0 -y|^{n - \alpha}} \]
\[ = C_1 \mu (K) I_{\alpha} \mu^{\, \prime} (x_0) \leq C_1 \mu (K) I_{\alpha} \mu (x_0) < \infty .\]

To prove (\ref{negativeindex}) for $\mu \in L^{- \alpha/2, 2} (\R^n)$, as in the proof of Lemma \ref{finiteenergy}, we have 
\[  \int_{\Omega} G \mu \, d \mu  = \Vert \mu - \mu^{\, \prime} \Vert^2_{L^{- \alpha/2, 2} (\R^n)} . \]

Let $\mathcal{A}$ denote the set of linear combinations of positive measures belonging to $L^{-\alpha/2, 2} (\R^n)$ and supported in $\Omega^c$.  We claim that 
\begin{equation} \label{extremal}
\Vert \mu - \mu^{\, \prime} \Vert^2_{L^{- \alpha/2, 2} (\R^n)} = \inf_{\lambda \in \mathcal{A}} \Vert \mu - \lambda \Vert^2_{L^{- \alpha/2, 2} (\R^n)}, 
\end{equation}
i.e., that $\mu^{\, \prime}$ is extremal for the right side of (\ref{extremal}).  A similar statement where the supremum is over positive measures $\lambda$ is proved in \cite{L}, Lemma 4.4, but the extension involving linear combinations is not difficult, as follows.  Let $\langle \, , \, \rangle_{*}$ denote the inner product in $L^{- \alpha/2, 2} (\R^n)$:
\[  \langle \nu_1 ,\nu_2 \rangle_{*} = \int_{\R^n} I_{\alpha/2} \nu_1 I_{\alpha/2} \nu_2 \, dx = \int_{\R^n} I_{\alpha} \nu_1 \, d\nu_2 . \]
Let $\lambda \in \mathcal{A}$. Then 
\[  \Vert \mu - \lambda \Vert^2_{L^{- \alpha/2, 2}(\R^n)} = \Vert \mu - \mu^{\, \prime} \Vert^2_{L^{- \alpha/2, 2}(\R^n)} + 2 \langle  \mu - \mu^{\, \prime}, \mu^{\, \prime} - \lambda  \rangle_{*} + \Vert \mu^{\, \prime} - \lambda \Vert^2_{L^{- \alpha/2, 2}(\R^n)} . \]
However, 
\[ \langle  \mu - \mu^{\, \prime}, \mu^{\, \prime} - \lambda  \rangle_{*} = \int_{\R^n} I_{\alpha} (\mu - \mu^{\, \prime} ) d (\mu^{\, \prime} - \lambda) =0,  \]
by (\ref{bal2}), because $\mu^{\, \prime}, \lambda \in L^{- \alpha/2, 2} (\R^n)$ are supported in $\Omega^c$.  Hence $\Vert \mu - \lambda \Vert^2_{L^{- \alpha/2, 2}(\R^n)}
\geq \Vert \mu - \mu^{\, \prime} \Vert^2_{L^{- \alpha/2, 2}(\R^n)}$, which establishes (\ref{extremal}).  

By Deny's theorem, as noted above, every distribution $\lambda \in L^{-\alpha/2, 2} (\R^n)$ supported in $\Omega^c$ can be approximated in norm by by 
linear combinations of positive measures supported in $\Omega^c$.  Therefore we can replace the class $\mathcal{A}$ with the class $\mathcal{B} = \{ \lambda \in L^{-\alpha/2, 2} (\R^n): \mbox{supp} \, \lambda \subseteq \Omega^c \}$, to obtain
\[  \int_{\Omega} G \mu \, d \mu =  \inf_{\lambda \in \mathcal{B}} \Vert \mu - \lambda \Vert^2_{L^{- \alpha/2, 2} (\R^n)}. \]
By the Hahn-Banach theorem, the last infimum is the square of the norm of $\mu$ in $L^{- \alpha/2, 2} (\Omega)$.  Thus (\ref{negativeindex}) holds.
\end{proof}

We will need a few more facts about $G$ for the next proof.  By \cite{BBK}, p. 15 and 20-21, for $x \in \Omega$, the function $x \rightarrow G(x,y)$ is an $\alpha$-harmonic (hence $C^2$) function of $y$ on $\Omega \setminus \{x\}$, and satisfies a Harnack inequality: for $K$ a compact subset of $\Omega \setminus \{x\}$, there exists a constant $C (K, \Omega, \alpha)>0$ such that $G(x, y_1) \leq C (K, \Omega, \alpha) G(x, y_2)$ for all $y_1, y_2 \in K$.  (That this inequality holds even if $x$ and $y$ are in different connected components of $K$ is a remarkable feature that does not hold in the classical case $\alpha =2$.) The definition of the Green's function shows that $\lim_{y \rightarrow x} G(x,y) = + \infty$.  This fact and the Harnack inequality show that we have the strict inequality $G(x,y) >0$ for each $x,y \in \Omega$.  Then applying Harnack's inequality again shows that for any $x \in \Omega$ and $K \subset \Omega$ compact, $G(x,y)$ is bounded away from $0$ on $K$: more precisely, 
\[ C(x, K)\equiv \inf_{y \in K} G(x,y) >0 .\]
In particular, if $\mu$ is a positive measure on $\Omega$ and $\mu_K$ is the restriction of $\mu$ to $K$, then 
\begin{equation} \label{Harnackconsequence}
G \mu_K (x) = \int_K G(x,y) \, d \mu (y) \geq C(x, K) \mu (K).  
\end{equation}
Also, $C(x,K)$ is a measurable function of $x \in \Omega$, since, for a countable dense subset $\{y_i \}$ of $K$ and $t\in \R$, we have $\{ x \in \Omega: C(x, K) < t \} = \cup_i \{ x \in \Omega: G(x, y_i) < t \},$ by the continuity of $G(x,y)$ for $y \neq x$.   

We can remove the compact support and/or finiteness assumptions on $\mu$ for parts of the last two lemmas by limiting arguments.

\begin{Lemma} \label{Gbnded}
(A) Suppose $\mu$ is a positive measure on $\Omega$. Then the following are equivalent:

\vspace{0.1in}

(i) $\mu \in L^{-\alpha/2, 2} (\Omega)$; 

(ii) $\int_{\Omega} G\mu \, d \mu < \infty$;   

(iii) $G \mu \in  L^{\alpha/2, 2}_0 (\Omega)$.

\vspace{0.1in}

\noindent{If} these conditions hold, then $G \mu$ is $\alpha$-quasicontinuous and 
\begin{equation} \label{normidentity}
  \Vert G \mu \Vert_{ L^{\alpha/2, 2}_0 (\Omega)}^2 = \int_{\Omega} G \mu \, d \mu = \Vert  \mu \Vert_{ L^{-\alpha/2, 2} (\Omega)}^2 . 
\end{equation}
Also, if $\mu_1, \mu_2$ are positive measures belonging to $L^{-\alpha/2, 2} (\Omega)$, then (\ref{normidentity}) holds for $\mu= \mu_1 - \mu_2$ as well.

\vspace{0.1in}

(B) We can extend $G$ by continuity to an isometry from $L^{-\alpha/2, 2} (\Omega)$ onto $L^{\alpha/2, 2}_0 (\Omega)$.  Then for all $\mu \in L^{-\alpha/2, 2} (\Omega)$,
\begin{equation}\label{normidentitygen}
 \Vert G \mu \Vert_{ L^{\alpha/2, 2}_0 (\Omega)}^2 = \langle  \mu , G \mu \rangle = \Vert  \mu \Vert_{ L^{-\alpha/2, 2} (\Omega)}^2 , 
\end{equation}
where $\langle , \rangle$ donotes the duality pairing between $L^{-\alpha/2, 2} (\Omega)$ and $L^{\alpha/2, 2}_0 (\Omega)$.
\end{Lemma}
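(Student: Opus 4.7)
The plan is to reduce to the case of finite, compactly supported positive measures (handled by Lemmas \ref{finiteenergy} and \ref{charSobNegindex}) via a monotone exhaustion $\mu_j=\chi_{K_j}\mu$ with $K_j\uparrow\Omega$, then pass to the limit using monotone convergence together with Cauchy-sequence arguments in $L^{\alpha/2,2}_0(\Omega)$ and $L^{-\alpha/2,2}(\Omega)$.

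First I verify that each of (i)--(iii) forces $\mu$ to be locally finite. For (i), pairing $\mu$ against a positive $\varphi\in C_0^\infty(\Omega)$ with $\varphi\ge\chi_K$ and using the Sobolev duality bounds $\mu(K)$. For (ii), $\int G\mu\,d\mu<\infty$ yields $G\mu(x_0)<\infty$ at some $x_0$, and then $\mu(K)\le G\mu(x_0)/C(x_0,K)$ by (\ref{Harnackconsequence}). For (iii), $G\mu$ has a quasicontinuous representative that is finite q.e., hence Lebesgue-a.e., reducing to (ii). Once $\mu$ is locally finite, fix a compact exhaustion $K_j\uparrow\Omega$ and set $\mu_j=\chi_{K_j}\mu$; each $\mu_j$ is a finite positive measure compactly supported in $\Omega$, and $G\mu_j\uparrow G\mu$ pointwise by monotone convergence. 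Whenever any of the three conditions holds for $\mu_j$, Lemmas \ref{finiteenergy} and \ref{charSobNegindex} yield the full identity $\|G\mu_j\|^2_{L^{\alpha/2,2}_0(\Omega)}=\int_\Omega G\mu_j\,d\mu_j=\|\mu_j\|^2_{L^{-\alpha/2,2}(\Omega)}$.

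The implication (i)$\Rightarrow$(ii) follows from the monotonicity of the dual norm on positive measures, $\|\mu_j\|_{L^{-\alpha/2,2}(\Omega)}\le\|\mu\|_{L^{-\alpha/2,2}(\Omega)}$ (take the supremum of the pairing over positive test functions with $\|\varphi\|_{L^{\alpha/2,2}_0}\le 1$), combined with the identity for $\mu_j$ and monotone convergence. For (ii)$\Rightarrow$(iii), $\int G\mu_j\,d\mu_j\le\int G\mu\,d\mu<\infty$ forces $\mu_j\in L^{-\alpha/2,2}(\Omega)$ by Lemma \ref{charSobNegindex}; then since $\mu_k-\mu_j\ge 0$ is itself a finite compactly supported measure satisfying (ii), its identity yields
\[
\|G\mu_k-G\mu_j\|^2_{L^{\alpha/2,2}_0(\Omega)}=\int G\mu_k\,d\mu_k-2\int G\mu_j\,d\mu_k+\int G\mu_j\,d\mu_j .
\]
Writing $a_{i,\ell}=\int G\mu_i\,d\mu_\ell$, the bound $\mu_j\le\mu_k$ gives $a_{j,j}\le a_{j,k}\le a_{k,k}$; since $a_{j,j}, a_{k,k}\uparrow\int G\mu\,d\mu$, the squeeze principle delivers $a_{j,k}\to\int G\mu\,d\mu$ and the right-hand side tends to zero. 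The Cauchy limit $f\in L^{\alpha/2,2}_0(\Omega)$ equals $G\mu$ a.e. by combining pointwise monotone convergence with the quasicontinuity fact recalled in the preamble (an $L^{\alpha/2,2}$-convergent sequence has a subsequence converging q.e. to a quasicontinuous representative of its limit). Hence $G\mu\in L^{\alpha/2,2}_0(\Omega)$ is quasicontinuous and (\ref{normidentity}) follows in the limit. The implication (iii)$\Rightarrow$(i) is handled via duality: for $\varphi\in C^\infty_0(\Omega)$, the Landkof identity $(-\triangle)^{\alpha/2}G(\cdot,y)=\delta_y-\epsilon'_y$ (with $\epsilon'_y$ supported in $\Omega^c$) together with Fubini gives $\langle G\mu,\varphi\rangle_{L^{\alpha/2,2}_0(\Omega)}=\int\varphi\,d\mu$, so that $|\int\varphi\,d\mu|\le\|G\mu\|_{L^{\alpha/2,2}_0(\Omega)}\|\varphi\|_{L^{\alpha/2,2}_0(\Omega)}$ and $\mu\in L^{-\alpha/2,2}(\Omega)$.

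The signed case $\mu=\mu_1-\mu_2$ with positive $\mu_i\in L^{-\alpha/2,2}(\Omega)$ follows from linearity of $G$ and polarization of the positive-case identity to obtain $\langle G\mu_1,G\mu_2\rangle_{L^{\alpha/2,2}_0(\Omega)}=\int G\mu_1\,d\mu_2$, from which expanding $\|G\mu\|^2$ and using symmetry of $G$ yields (\ref{normidentity}) for $\mu$. For part (B), the subspace of $L^{-\alpha/2,2}(\Omega)$ of differences of positive measures is dense by the Deny approximation already used in Lemma \ref{DenyCorollary}; part (A) makes $G$ an isometry on this subspace, so it extends continuously to an isometric embedding of $L^{-\alpha/2,2}(\Omega)$ into $L^{\alpha/2,2}_0(\Omega)$. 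Surjectivity is obtained by identifying the extended $G$ with the Riesz isomorphism, using that the relation $\langle G\mu,\varphi\rangle_{L^{\alpha/2,2}_0(\Omega)}=\langle\mu,\varphi\rangle$ proved above on the dense set $C^\infty_0(\Omega)$ extends to all of $L^{\alpha/2,2}_0(\Omega)$; then (\ref{normidentitygen}) is automatic. The principal technical hurdle is the Cauchy step and the a.e.\ identification of the Sobolev-space limit with the pointwise monotone limit $G\mu$ via the quasicontinuity machinery.
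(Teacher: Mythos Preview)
Your proposal is correct and follows the paper's strategy: reduce to the compactly supported case via $\mu_j=\chi_{K_j}\mu$, invoke Lemmas~\ref{finiteenergy} and~\ref{charSobNegindex}, and pass to the limit by Cauchy arguments. There are two tactical differences worth noting. For (i)$\Rightarrow$(ii), you use the monotonicity $\|\mu_j\|_{L^{-\alpha/2,2}(\Omega)}\le\|\mu\|_{L^{-\alpha/2,2}(\Omega)}$, which requires that the dual norm of a positive measure is computed on nonnegative test functions---equivalently, that $u\mapsto|u|$ is a contraction on $L^{\alpha/2,2}_0(\Omega)$ (true via the Gagliardo seminorm for $0<\alpha<2$, but not stated in the paper); the paper instead establishes $\mu_K\in L^{-\alpha/2,2}(\R^n)$ directly by a multiplication-by-cutoff and weighted-maximal-function argument, and then bounds $\int G\mu_K\,d\mu_K\le\int G\mu_K\,d\mu\le\|\mu\|\,\|G\mu_K\|$. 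For surjectivity in~(B), your identification of the extended $G$ with the Riesz map via $\langle G\mu,\varphi\rangle_{L^{\alpha/2,2}_0}=\langle\mu,\varphi\rangle$ is cleaner than the paper's Hahn--Banach contradiction. Your sketch of (iii)$\Rightarrow$(i) is the dual formulation of the paper's argument; the paper spells out the Fubini justification by checking $(-\triangle)^{\alpha/2}h\in L^{q^*}$ and $G\mu\in L^{p^*}$, which you should add, since the formal identity $(-\triangle)^{\alpha/2}G(\cdot,y)=\delta_y-\epsilon'_y$ alone does not license interchanging the $\mu$-integral with $(-\triangle)^{\alpha/2}$.
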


\begin{proof} (A) First suppose $ \mu \in L^{-\alpha/2, 2} (\Omega)$ is a positive measure.  Let $K$ be any compact subset of $\Omega$ and let $\mu_K$ be the restriction of $\mu$ to $K$.  Then we can find $\varphi \in C^{\infty}_0 (\Omega)$ such that $\varphi \geq 0$ and $\varphi =1$ on $K$.  Then 
\[ \mu (K) \leq \int_{\Omega} \varphi \, d \mu \leq \Vert \mu \Vert_{L^{-\alpha/2, 2} (\Omega)} \Vert \varphi \Vert_{L^{\alpha/2, 2}_0 (\Omega)} < \infty.\]
Hence $\mu$ is finite on compact subsets of $\Omega$.

We claim that $\mu_K \in  L^{-\alpha/2, 2} (\Omega)$.  Since $\mu_K$ is finite and compactly supported in $\Omega$, it suffices to show that $\mu_K \in  L^{-\alpha/2, 2} (\R^n)$, by Lemma \ref{charSobNegindex}.  Using (\ref{energy}), 
\[ \Vert \mu_K \Vert_{L^{-\alpha/2, 2} (\R^n)} = \Vert I_{\alpha/2} \mu_K \Vert_{L^2 (\R^n)} = \sup_{\varphi \in \mathcal{A}} \int_{\R^n} (I_{\alpha/2} \mu_K ) \varphi \, dx, \]
where $\mathcal{A} = \{ \varphi \in C^{\infty}_0 (\R^n) : \varphi \geq 0 \, \, \mbox{and} \,\, \Vert \varphi \Vert_{L^2} (\R^n) \leq 1 \}$.  Since $I_{\alpha/2}$ has a symmetric kernel, 
\[ \Vert \mu_K \Vert_{L^{-\alpha/2, 2} (\R^n)} = \sup_{\varphi \in \mathcal{A} } \int_{\R^n} I_{\alpha/2} \varphi \, d \mu_K .   \]
Let $\gamma \in C^{\infty}_0 (\Omega)$ satisfy $\gamma \geq 0$ and $\gamma =1$ on $K$.  Then
\[ \Vert \mu_K \Vert_{L^{-\alpha/2, 2} (\R^n)} = \sup_{\varphi \in \mathcal{A} } \int_{\R^n} \gamma I_{\alpha/2} \varphi \, d \mu_K  \leq \sup_{\varphi \in \mathcal{A} } \int_{\R^n} \gamma I_{\alpha/2} \varphi \, d \mu,  \]
since the integrand is non-negative on $\Omega$.  By the same argument as in the proof of Lemma \ref{charSobNegindex}, we have 
\[ \Vert \gamma I_{\alpha/2} \varphi\Vert_{L^{\alpha/2, 2} (\R^n)} \leq c_{\gamma, \alpha, n}  \Vert  I_{\alpha/2} \varphi\Vert_{L^{\alpha/2, 2} (\R^n)} = c_{\gamma, \alpha, n} \Vert \varphi \Vert_{L^2 (\R^n)} \leq c_{\gamma, \alpha, n} . \]
By Lemma \ref{DenyCorollary}, then, $ \gamma I_{\alpha/2} \varphi \in L^{\alpha/2, 2}_0 (\Omega)$.  Hence 
\[ \Vert \mu_K \Vert_{L^{-\alpha/2, 2} (\R^n)} \leq \sup_{\varphi \in \mathcal{A}} \Vert \mu \Vert_{L^{-\alpha/2, 2}(\Omega)} \Vert \gamma I_{\alpha/2} \varphi\Vert_{L^{\alpha/2, 2} (\R^n)} \]
\[ \leq c_{\gamma, \alpha, n} \Vert \mu \Vert_{L^{-\alpha/2, 2}(\Omega)} < \infty. \]

Since $\mu_K \in L^{-\alpha/2, 2}(\Omega)$, Lemma \ref{finiteenergy} gives that $G \mu_K \in L^{\alpha/2, 2}_0 (\Omega)$, and $\Vert G \mu_K \Vert_{L^{\alpha/2, 2} (\R^n)}^2 = \int_{\Omega} G \mu_K \, d \mu_K < \infty$. Hence
\[  \int_{\Omega} G \mu_K \, d \mu_K \leq \int_{\Omega} G \mu_K \, d \mu \leq \Vert \mu \Vert_{L^{-\alpha/2, 2} (\Omega)} \Vert G \mu_K \Vert_{L^{\alpha/2, 2}_0 (\R^n)}\]
\[  = \Vert \mu \Vert_{L^{-\alpha/2, 2} (\Omega)}  \left(  \int_{\Omega} G \mu_K \, d \mu_K \right)^{1/2} .\]
Dividing and squaring gives 
\[   \int_{\Omega} G \mu_K \, d \mu_K \leq \Vert \mu \Vert^2_{L^{-\alpha/2, 2}_0 (\Omega)}  . \]

Let $\{ K_j \}_{j=1}^{\infty}$ be an increasing sequence of compact subsets of $\Omega$ with $\cup_{j=1}^{\infty} K_j = \Omega$.  Let $\mu_j$ be the restriction of $\mu$ to $K_j$. Then by our last conclusion, we have $\int_{\Omega} G \mu_j \, d \mu_j \leq \Vert \mu \Vert^2_{L^{-\alpha/2, 2}_0 (\Omega)}$.  
The monotone convergence theorem shows that $\int_{\Omega} G \mu \, d \mu \leq \Vert \mu \Vert^2_{L^{-\alpha/2, 2} (\Omega)}$. Hence (i) implies (ii).

Now suppose $\mu$ is a positive measure on $\Omega$ and $\int_{\Omega} G \mu \, d \mu < \infty$. We first note that $\mu$ is finite on any compact subset $K$ of $\Omega$. Assume $\mu(K) >0$ and let $\mu_K$ be the restriction of $\mu$ to $K$.  Let $C(x,K)= \inf_{y \in K} G(x,y)$.  We noted earlier that $C(x,K)>0$ for all $x \in \Omega$.  Hence $C(K) = \int_K C(x,K) \, d \mu (x) >0$.  By (\ref{Harnackconsequence}), 
\[    C(K) \mu(K) = \int_K C(x,K) \mu (K) \, d \mu (x) \leq \int G \mu_K \, d \mu_K \leq \int G\mu \, d \mu < \infty . \]
Hence $\mu(K) < \infty$. 

Let $K_j$ and $\mu_j$ be as above.  Since $\int_{\Omega} G \mu_j \, d \mu_j < \infty$, Lemma \ref{charSobNegindex} implies that $\mu_j \in L^{-\alpha/2, 2} (\Omega)$ and 
\begin{equation} \label{muj}
  \Vert \mu_j \Vert^2_{L^{- \alpha/2, 2} (\Omega)} = \int_{\Omega}  G \mu_j \, d \mu_j . 
\end{equation}
We claim that $\{  \mu_j \}_{j=1}^{\infty}$ is a Cauchy sequence in $L^{-\alpha/2, 2} (\Omega)$.  Suppose $\ell >j$.  Then by Lemma \ref{charSobNegindex} applied to the finite positive measure $\mu_{\ell} - \mu_j$ (which is $\mu$ restricted to $K_{\ell} \setminus K_j$), 
\[ \Vert  \mu_{\ell} - \mu_j \Vert_{L^{-\alpha/2, 2}(\Omega)}^2 = \int_{\Omega} G (\mu_{\ell} - \mu_j  ) \, d (\mu_{\ell} - \mu_j)  \]
\[  = \int_{\Omega} G ( \mu \chi_{K_{\ell} \setminus K_j})\chi_{K_{\ell} \setminus K_j} \, d \mu \leq  \int_{\Omega} G ( \mu \chi_{\Omega \setminus K_j})\chi_{\Omega \setminus K_j} \, d \mu  \rightarrow 0 \]
as $j \rightarrow \infty$, by the dominated convergence theorem, with dominating function $G \mu$.  
Hence $\mu_j$ converges to some $\mu_0 \in  L^{- \alpha/2, 2} (\Omega)$.  Let $\varphi \in C^{\infty}_0 (\Omega)$.  For $j$ large enough that supp $\varphi \subseteq K_j$, we have $\langle \mu_j, \varphi \rangle = \langle \mu, \varphi \rangle$, so $\langle \mu, \varphi \rangle = \lim_{j \rightarrow \infty} \langle \mu_j, \varphi \rangle = \langle \mu_0, \varphi \rangle$.  Hence $\mu = \mu_0 $ in $\mathcal{D}^{\, \prime} (\Omega)$.  Therefore $\mu \in L^{-\alpha/2, 2} (\Omega)$. Thus (ii) implies (i).

Hence (i) and (ii) are equivalent, and if either holds, letting $j \rightarrow \infty$ in (\ref{muj}) shows that $\int_{\Omega} G \mu \, d \mu = \Vert  \mu \Vert_{ L^{-\alpha/2, 2} (\Omega)}^2$ .  

Now suppose (i) and (ii) hold.  Note that $\mu_j \in L^{-\alpha/2,2} (\R^n)$ since $\int_{\Omega} G \mu_j \, d \mu_j \leq \int_{\Omega} G \mu \, d \mu < \infty$.  
By the same argument as above, only using Lemma \ref{finiteenergy} instead of Lemma \ref{charSobNegindex}, $\{ G \mu_j \}_{j=1}^{\infty}$ is a Cauchy sequence in $L^{\alpha/2, 2}_0 (\Omega)$ and hence $G \mu_j $ converges in $ L^{\alpha/2, 2}_0 (\Omega)$ to some $h$.  Since $ L^{\alpha/2, 2}_0 (\R^n)$ imbeds continuously in $L^{p^*} (\R^n)$, where $p^* = 2n/ (n - \alpha)$ (see e.g., \cite{DPV}, Theorem 6.5), $G \mu_j $ converges to $h$ in $ L^{p^*} (\R^n)$, and hence a subsequence converges almost everywhere to $h$. But $G \mu_j (x)$ increases to $G \mu (x)$ at every $x$, hence $h = G \mu$.  Therefore $G \mu \in L^{\alpha/2, 2}_0 (\Omega)$.  By Lemma \ref{finiteenergy}, \[  \Vert G \mu_j \Vert^2_{L^{\alpha/2, 2}_0 (\Omega)} =  \int_{\Omega} G \mu_j \, d \mu_j, \]
and taking the limit as $j \rightarrow \infty$ and applying the monotone convergence theorem on the right side gives the identity on the left of (\ref{normidentity}).  Hence (i) and (ii) imply (iii).

We now show that (i) and (ii) imply that $G \mu$ is $\alpha$-quasicontinuous.  Indeed, each $G\mu_j$ is quasicontinuous, and $G \mu_j$ converges in $ L^{\alpha/2, 2}$ norm and pointwise to $G \mu$, so the quasicontinuity of $G \mu$ follows from our earlier remarks.

Let us now assume that $G \mu \in L^{\alpha/2, 2}_0 (\Omega)$. Then, as we noted above, $G \mu \in L^{p^*}(\Omega)$, where $p^{*}=2n/(n-\alpha)$. For any $h \in C^\infty_0(\Omega)$, we have (see \cite{BBK}, p. 14) 
\[
G \left((-\Delta)^{\alpha/2} h\right) (x) = h(x), \quad x \in \R^n.
\]
By Fourier inversion and (\ref{pointwisefracLap}), we see easily that
\[
 |(-\Delta)^{\alpha/2} h(x)|\le c (1+|x|)^{-n-\alpha}, \quad x \in \R^n. 
 \]
Therefore $(-\Delta)^{\alpha/2} h\in L^r(\R^n)$ for $r>n/(n+\alpha)$. In particular, 
$(-\Delta)^{\alpha/2} h\in L^{q^*}(\R^n)$, where $q^* =2n/(n+\alpha)$ is the conjugate index to $p^*$.
It follows that 
\[
\iint_{\R^n\times\Omega} G(x, y) \left\vert(-\Delta)^{\alpha/2} h (x)\right\vert d\mu(y) dx =\int_{\R^n} \left\vert(-\Delta)^{\alpha/2} h(x)\right\vert \, G \mu(x) \, dx <\infty.
   \]
Hence, by Fubini's theorem 
\[
\left \vert \int_\Omega h \, d \mu \right\vert =\left \vert  \int_\Omega G \left((-\Delta)^{\alpha/2} h\right) \, d \mu \right\vert =
\left \vert \int_{\R^n} (-\Delta)^{\alpha/2} h \, G \mu \, dx \right\vert 
\]
\[
= \left \vert \langle G \mu, (-\Delta)^{\alpha/2} h\rangle \right\vert \le 
||G \mu||_{L^{\alpha/2, 2}_0 (\R^n)} ||(-\Delta)^{\alpha/2}h||_{L^{-\alpha/2, 2} (\R^n)}\]
\[
=||G \mu||_{L^{\alpha/2, 2}_0 (\Omega)} ||h||_{L^{\alpha/2, 2}_0 (\Omega)}.
\]
Thus, $\mu \in L^{- \alpha/2, 2} (\Omega)$, and so (iii) implies (i).

Now suppose $\mu_1, \mu_2$ are positive measures belonging to $L^{-\alpha/2, 2} (\Omega)$.  By (\ref{normidentity}), 
\[   \Vert G (\mu_1 + \mu_2) \Vert_{ L^{\alpha/2, 2}_0 (\Omega)}^2 = \int_{\Omega} G (\mu_1 + \mu_2) \, d (\mu_1 + \mu_2)  .\]
Expanding and using (\ref{normidentity}) to cancel the non-diagonal terms, we obtain
\begin{equation} \label{polarization}
 \langle G \mu_1, G \mu_2 \rangle_{\alpha/2} +  \langle G \mu_2, G \mu_1 \rangle_{\alpha/2} =  \int_{\Omega} G \mu_1  \, d  \mu_2 + \int_{\Omega} G \mu_2  \, d  \mu_1 , 
\end{equation}
where $\langle , \rangle_{\alpha/2}$ denotes the inner product in $L^{\alpha/2, 2} (\Omega)$.  To obtain the first identity in (\ref{normidentity}) for $\mu = \mu_1 - \mu_2$, expand both sides as above, only with $\mu_1 - \mu_2$ in place of $\mu_1 + \mu_2$, and use (\ref{polarization}), and (\ref{normidentity}) for $\mu_1$ and $\mu_2$.  The second identity in (\ref{normidentity}) is proved in the same way.

\vspace{0.1in}

(B)  We now have that $G$ is an isometry from the linear combinations of positive measures in $L^{- \alpha/2, 2} (\Omega)$ to $L^{\alpha/2, 2}_0 (\Omega)$.  By Deny's theorem (\cite{D}, Theorem II.2), the linear combinations of positive measures in $L^{- \alpha/2, 2} (\Omega)$ are dense.  Hence we can extent $G$ to be an isometry from all of $L^{- \alpha/2, 2} (\Omega)$ to $L^{\alpha/2, 2}_0 (\Omega)$.  To prove (\ref{normidentitygen}), suppose $\mu \in L^{- \alpha/2, 2} (\Omega)$.  Then there exists a sequence $\mu_j$ of linear combinations of positive measures converging in $L^{- \alpha/2, 2} (\Omega)$ norm to $\mu$.  Then $G \mu_j$ converges to $G \mu$ in $L^{ \alpha/2, 2}_0 (\Omega)$, hence 
\[  \langle \mu, G \mu  \rangle = \lim_{j \rightarrow \infty} \langle \mu_j, G \mu_j  \rangle =   \lim_{j \rightarrow \infty} \int  G \mu_j \, d \mu_j \]
\[ = \lim_{j \rightarrow \infty} \Vert \mu_j \Vert^2_{L^{- \alpha/2, 2} (\Omega)} = \Vert \mu \Vert^2_{L^{- \alpha/2, 2} (\Omega)} . \]
The other identity in (\ref{normidentitygen}) follows now because $G$ is an isometry.  

To prove that $G$ maps $L^{- \alpha/2, 2} (\Omega)$ onto $L^{\alpha/2, 2}_0 (\Omega)$, suppose otherwise.  Then by the Hahn-Banach theorem, there exists $\nu \in L^{- \alpha/2, 2} (\Omega)$ which is not identically zero on $L^{\alpha/2, 2}_0 (\Omega)$, but vanishes on the image of $L^{- \alpha/2, 2} (\Omega)$, hence on $G\nu$.  Therefore 
\[ \Vert \nu \Vert_{L^{- \alpha/2, 2} (\Omega)}^2 = \langle \nu, G \nu \rangle =0, \]
so $\nu =0$ in $L^{- \alpha/2, 2} (\Omega)$, a contradiction.  \end{proof}

The last result defines $G \mu$, for $\mu \in L^{- \alpha/2, 2} (\Omega)$, as an element of the Sobolev space $L^{\alpha/2, 2}_0 (\Omega)$, hence a.e.  When considering $G \mu$ pointwise, we now define $G\mu$ $\alpha$-q.e. by choosing an $\alpha$-quasicontinuous representative $\widetilde{G \mu}$ of the equivalence class of $G \mu$ in $L^{\alpha/2, 2}_0 (\Omega)$, and defining $G \mu$ to be $\widetilde{G \mu}$.  Any other $\alpha$-quasicontinuous representative will agree $\alpha$-q.e., by \cite{AH}, Ch. 6.1, so $G \mu$ is now defined as an equivalence class under the equivalence relation of equality q.e.  This convention will allow us to avoid replacing $G \mu$ with $\widetilde{G \mu}$ at several points later, and, more importantly, will allow us to interpret the identity $u = G(u \omega + \nu)$ as holding pointwise $\alpha$-q.e. rather than just a.e. If $\mu$ is a postive measure, $G \mu$ is defined for all $x$ as $\int_{\Omega} G(x,y) \, d \mu (y)$, which is finite $\alpha$-q.e. and is $\alpha$-quasicontinuous, by Lemma (\ref{Gbnded}). If $\mu \in L^{- \alpha/2, 2} (\Omega)$ is a linear combination of positive measures, $G \mu$ is defined where all of the measures in the linear combination are finite, hence $\alpha$-q.e., and $G \mu$ is $\alpha$-quasicontinuous.  Hence our pointwise definition of $G \mu$ for general $\mu \in L^{- \alpha/2, 2} (\Omega)$ is consistent with pointwise definitions considered previously.  
 
\begin{Lemma}\label{Tbetaequiv} Let $T$ be the operator in (\ref{defT}). Let $\beta >0$.  The following are equivalent:

(i) $T$ maps $L^2 (\omega)$ to itself boundedly with $||T|| \, \leq \, \beta^2$;

(ii) (\ref{embedd}) holds, i.e., 
\[ \Vert h \Vert_{L^2 (\omega)} \leq \beta \Vert  h \Vert_{L^{\alpha/2, 2}_0 (\Omega)}, \,\,\, \mbox{for all} \,\,\, h \in C^{\infty}_0 (\Omega);  \]

(iii) \begin{equation}\label{embedd2}
\Vert \tilde{u} \Vert_{L^2 (\omega)} \leq \beta \Vert u \Vert_{L^{\alpha/2,2}_0 (\Omega)}, \,\,\, \mbox{for all} \,\,\, u \in L^{\alpha/2 ,2}_0 (\Omega),
\end{equation}
where $\tilde{u}$ denotes any quasicontinuous representative of $u$ in $L^{\alpha/2,2}_0 (\Omega)$;

(iv) \begin{equation} \label{dualform}
\Vert h \omega \Vert_{L^{- \alpha/2, 2} (\Omega)} \leq \beta \Vert h \Vert_{L^2 (\omega)},  \,\,\, \mbox{for all} \,\,\, h \in L^2 (\omega).
\end{equation}

\end{Lemma}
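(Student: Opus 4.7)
The plan is to prove the four equivalences in three linked pieces: (ii) $\Leftrightarrow$ (iii) by a density argument, (iii) $\Leftrightarrow$ (iv) by Hilbert-space duality, and (iv) $\Leftrightarrow$ (i) by factoring $T$ through the Green's isometry of Lemma \ref{Gbnded}(B). A recurring auxiliary observation is that each of the four conditions forces $\omega$ to charge no set of $\alpha$-capacity zero, which allows us to identify pointwise integrals and duality pairings with their quasicontinuous analogues $\omega$-a.e.

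For (ii) $\Leftrightarrow$ (iii): (iii) $\Rightarrow$ (ii) is immediate. Conversely, (ii) first forces $\omega(K)=0$ whenever $K\subset \Omega$ is compact with $\mathrm{cap}_\alpha(K)=0$: pick $h_n\in C_0^\infty(\Omega)$ with $h_n\geq \chi_K$ and $\|h_n\|_{L^{\alpha/2,2}_0(\Omega)}\to 0$, then $\omega(K)\le \int h_n^2\, d\omega\le \beta^2 \|h_n\|^2 \to 0$. Given $u\in L^{\alpha/2,2}_0(\Omega)$ with quasicontinuous representative $\tilde u$, choose $h_n\in C_0^\infty(\Omega)$ converging to $u$ in norm, pass to a subsequence converging $\alpha$-q.e.\ (hence $\omega$-a.e.) to $\tilde u$, and apply Fatou:
\[ \|\tilde u\|_{L^2(\omega)}^2 \leq \liminf_n \|h_n\|_{L^2(\omega)}^2 \leq \beta^2 \liminf_n \|h_n\|_{L^{\alpha/2,2}_0(\Omega)}^2 = \beta^2 \|u\|_{L^{\alpha/2,2}_0(\Omega)}^2. \]

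For (iii) $\Leftrightarrow$ (iv), introduce $M_\omega:L^2(\omega)\to L^{-\alpha/2,2}(\Omega)$ by $M_\omega f:=f\omega$; condition (iv) reads $\|M_\omega\|\le \beta$. From $\langle M_\omega f, u\rangle = \int \tilde u\, d(f\omega) = \int f \tilde u\, d\omega = \langle f, \tilde u\rangle_{L^2(\omega)}$ the Hilbert-space adjoint is $M_\omega^* u = \tilde u\in L^2(\omega)$, so $\|M_\omega^*\|\le \beta$ is exactly (iii); the equivalence follows from $\|M_\omega\|=\|M_\omega^*\|$. Then (iv) $\Rightarrow$ (i) comes from the factorization $T= M_\omega^*\circ G\circ M_\omega$ on $L^2(\omega)$, valid $\omega$-a.e.\ because $Tf=G(f\omega)$ is $\alpha$-quasicontinuous (Lemma \ref{Gbnded}) and $\omega$ is null on capacity-zero sets; combined with $G$ being an isometry $L^{-\alpha/2,2}(\Omega)\to L^{\alpha/2,2}_0(\Omega)$ (Lemma \ref{Gbnded}(B)), this gives $\|T\|\le \|M_\omega^*\|\,\|M_\omega\|\le \beta^2$, i.e., (i).

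For the reverse (i) $\Rightarrow$ (iv), the key identity, valid for $f\ge 0$ in $L^2(\omega)$ by Fubini and Lemma \ref{Gbnded}(A), is
\[ \langle Tf, f\rangle_{L^2(\omega)} = \int G(f\omega)\, d(f\omega) = \|f\omega\|_{L^{-\alpha/2,2}(\Omega)}^2, \]
with both sides $+\infty$ simultaneously, if applicable. Under (i) this yields (iv) on non-negative $f$, and the extension to signed $f=f_+-f_-$ uses the non-negativity of the cross-term $\int G(f_+\omega)\, d(f_-\omega)\ge 0$:
\[ \|f\omega\|^2 = \|f_+\omega\|^2 - 2\int G(f_+\omega)\, d(f_-\omega) + \|f_-\omega\|^2 \leq \|f_+\omega\|^2+\|f_-\omega\|^2 \leq \beta^2 \|f\|_{L^2(\omega)}^2. \]
The main technical obstacle is not any single estimate but the bookkeeping: one must juggle the pointwise kernel formula for $T$, the Hilbert-space operator $M_\omega^* G M_\omega$, and the duality pairing $\int \tilde u\, d\mu$ (with its dependence on quasicontinuous representatives) as a single object, and this consistency is precisely what the absolute continuity of $\omega$ with respect to $\alpha$-capacity provides.
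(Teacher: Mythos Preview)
Your proof is correct and rests on the same three ingredients as the paper's: the absolute continuity of $\omega$ with respect to $\alpha$-capacity (extracted from (ii)), the duality between the embedding $L^{\alpha/2,2}_0(\Omega)\hookrightarrow L^2(\omega)$ and the multiplication map $f\mapsto f\omega$, and the energy identity $\int_\Omega G\mu\,d\mu=\|\mu\|^2_{L^{-\alpha/2,2}(\Omega)}$ from Lemma~\ref{Gbnded}. Two presentational differences are worth noting. For (ii)$\Rightarrow$(iii) you pass to a q.e.-convergent subsequence and apply Fatou directly, whereas the paper argues that the approximating sequence is Cauchy in $L^2(\omega)$, identifies the limit on compact subsets, and then exhausts $\Omega$; your route is shorter but implicitly uses inner regularity of $\omega$ to pass from the compact-set capacity bound to arbitrary Borel sets. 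For (i)$\Leftrightarrow$(iv) you package things via the factorization $T=M_\omega^{*}\,G\,M_\omega$ and bound $\|T\|\le\|M_\omega\|^2$, while the paper uses self-adjointness of $T$ to obtain the sharper identity $\|T\|=\sup_{\|g\|_{L^2(\omega)}\le 1}\|g\omega\|^2_{L^{-\alpha/2,2}(\Omega)}$ directly; the paper's version records that the constant is exact, which your cross-term-dropping step for signed $f$ does not, but either yields the stated equivalence.
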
 

\begin{proof}  First we show the equivalence of (ii) and (iii).  Suppose (ii) holds.  Let  $u \in L^{\alpha/2,2}_0 (\Omega)$.  There exists a sequence $\{h_n \}_{n=1}^{\infty}$ with $h_n \in C^{\infty}_0 (\Omega)$ such that $||h_n - u ||_{L^{\alpha/2,2}_0 (\R^n)} \to 0$. Then, as noted earlier, there exists a subsequence $h_{n_k} \to \tilde u$  q.e. in $\R^n$,  where $\tilde u$ is a quasicontinuous representative of $u$.  By (ii), $h_{n_k} $ is a Cauchy sequence  in $L^2(\omega)$, hence $h_{n_k} \to u_0$ in $L^2(\omega)$ for some $u_0 \in L^2(\omega)$. Hence, replacing  $h_{n_k} $ with a further subsequence, we see that $h_{n_k} \to u_0$, $d \omega$ a.e., 
and at the same time $h_{n_k} \to \tilde u$ q.e. 

Let $K \subset \Omega$ be compact.  If $\varphi \in C^{\infty}_0 (\Omega)$ and $\varphi \geq 1$ on $K$, then by (\ref{embedd}), 
\[  \omega (K) \leq \Vert \varphi \Vert_{L^2 (\omega)}^2 \leq \beta^2 \Vert \varphi \Vert_{L^{\alpha/2, 2}_0 (\Omega)}^2 , \]
hence, taking the infimum over such $\varphi$,
\begin{equation} \label{abscont}
 \omega(K) \le \beta^2 \,  \text{cap}_{\alpha} (K, \Omega) .
\end{equation} 
For compact sets $K \subset \Omega$, we have 
\[ \text{cap}_{\alpha} (K, \Omega) \asymp \text{cap}_{\alpha} (K, \R^n), \]
where the constants of equivalence depend on $\text{dist}(K, \Omega^c)$, by the same argument as at the beginning of the proof of Lemma \ref{charSobNegindex}. In particular, $\omega$ is absolutely continuous with respect to $\text{cap}_{\alpha} (\cdot, \R^n)$. 

It follows that $h_{n_k} \to \tilde u$ $d \omega$-a.e. on $K$, and consequently 
$u_0=\tilde u$ $d \omega$-a.e. on $K$, for any compact set $K \subset \Omega$. 
Thus, using  (\ref{embedd}) with $h=h_{n_k}$, and 
 letting $n_k \to \infty$, we arrive at 
 \begin{equation}\label{embedd-b}
\Vert \tilde u \Vert_{L^2 (\omega_K)} \leq \beta \Vert u \Vert_{L^{\alpha/2,2}_0 (\Omega)}.
\end{equation}
Since $K$ is an arbitrary compact subset of $\Omega$, we deduce that 
 (\ref{embedd2}) holds. 
Hence (ii) implies (iii).  The converse is trivial.

Next we show that (ii) and (iv) are equivalent.  Suppose (ii) holds, $h \in L^2 (\omega)$, and, to begin with, that $h \geq 0$.  Let
\[ \mathcal{A} = \{ \varphi \in C^{\infty}_0 (\Omega) : \Vert \varphi \Vert_{L^{\alpha/2, 2}_0 (\Omega)} \leq 1 \}  \]
and let $\langle \cdot , \cdot \rangle$ denote the pairing between $L^{\alpha/2, 2}_0 (\Omega)$ and its dual $L^{-\alpha/2, 2} (\Omega)$.  Then   
\[ \Vert h \omega \Vert_{L^{- \alpha/2, 2}(\Omega)} =  \sup_{\varphi \in \mathcal{A}} |\langle h \omega, \varphi \rangle| = \sup_{\varphi \in \mathcal{A}} \left| \int_{\Omega} \varphi h \, d \omega  \right| . \]
For $\varphi \in \mathcal{A}$, we have $\Vert \varphi \Vert_{L^2 (\omega)}\leq \beta$, by (ii).  Hence
\[ \Vert h \omega \Vert_{L^{- \alpha/2, 2}(\Omega)} \leq   \sup_{g: \Vert g \Vert_{L^2 (\omega)} \leq \beta } \left|\int_{\Omega} gh \, d \omega \right| = \beta \Vert h \Vert_{L^2 (\omega)} . \]
The same argument holds if $h \leq 0$ (since we still have $|h\omega | = -h \omega \in L^{- \alpha/2, 2}(\Omega)$, which is needed to justify the identity $\langle h \omega, \varphi \rangle = \int_{\Omega} \varphi h \, d \omega$).  For a general $h \in L^2 (\omega)$, we have $h^+, h^- \in L^2 (\omega)$, so by what we have just shown, $h^+ \omega, h^- \omega \in L^{- \alpha/2, 2}(\Omega)$, hence $|h \omega|  \in L^{- \alpha/2, 2}(\Omega)$.  Then the same argument yields (\ref{dualform}) in the general case.  

Now suppose (iv) holds and $h \in C^{\infty}_0 (\Omega)$.  If $f \in L^2 (\omega)$, then $|f \omega| \in L^{-\alpha/2, 2}(\Omega)$ by (iv), so 
\[  \Vert h \Vert_{L^2 (\omega)} = \sup_{\Vert f \Vert_{L^2 (\omega)} \leq 1} \left| \int_{\Omega} h f \, d \omega \right| = \sup_{\Vert f \Vert_{L^2 (\omega)} \leq 1} \left|
\langle f \omega, h \rangle \right| \]
\[ \leq \sup_{\Vert f \Vert_{L^2 (\omega)} \leq 1} \Vert f \omega \Vert_{L^{-\alpha/2,2} (\Omega)} \Vert h \Vert_{L^{\alpha/2, 2}_0 (\Omega)} \leq \beta  \Vert h \Vert_{L^{\alpha/2, 2}_0 (\Omega)},\]
by (iv).  Thus (ii) holds.

We now prove the equivalence of (i) and (iv).  For either direction, we observe that since $T$ is self-adjoint, 
\[ \Vert T \Vert_{L^2 (\omega) \rightarrow L^2 (\omega) }= \sup_{g: \Vert g \Vert_{L^2 (\omega)} \leq 1} |\langle Tg, g \rangle_{\omega} | \]
\[ = \sup_{g: \Vert g \Vert_{L^2 (\omega)} \leq 1} \left|\int_{\Omega} \int_{\Omega} G(x,y) g(y) \, d \omega (y) g(x) \, d \omega (x) \right| .   \]
For $g \in L^2 (\omega)$, let $g^+ = g \chi_{\{ x \in \Omega: g(x)>0\}}$ and $g^- = - g \chi_{x \in \Omega: g(x) <0}$.  

Suppose (i) holds. Then 
\[ \int_{\Omega} G (g^+ \omega) \, g^+ d \omega = \int_{\Omega} \int_{\Omega} G(x,y) g^+(y) \, d \omega (y) g^+(x) \, d \omega (x)  \]
\[ \leq \Vert T \Vert \Vert g^+ \Vert_{L^2 (\omega)}^2 < \infty ,  \]
and similarly for $g^- \omega$.  By Lemma \ref{Gbnded}, $g^+ \omega, g^- \omega \in L^{- \alpha/2, 2} (\R^n)$, so $g \omega \in L^{- \alpha/2, 2} (\R^n)$, and 
\begin{equation} \label{normTeqn}
\Vert T \Vert = \sup_{\Vert g \Vert_{L^2 (\omega)} \leq 1} \int_{\Omega} G(g  \omega) g \, d\omega = \sup_{\Vert g \Vert_{L^2 (\omega)} \leq 1} \Vert g  \omega \Vert_{L^{- \alpha/2, 2} (\Omega)}^2 .
\end{equation}
Hence
\begin{equation} \label{negindexTest}
 \Vert g \omega \Vert_{L^{- \alpha/2, 2} (\Omega)}^2 \leq \Vert T \Vert   \Vert g \Vert_{L^2 (\omega)}^2 \leq \beta^2 \Vert g \Vert_{L^2 (\omega)}^2 , 
\end{equation}
for all $g \in L^2 (\omega)$. 

Conversely, if (iv) is assumed, and $g \in L^2 (\omega)$, then $g^+ \omega, g^- \omega \in L^{- \alpha/2, 2} (\Omega)$.  Then (\ref{normTeqn}) holds by Lemma \ref{Gbnded}. Then, using (\ref{dualform}) again, $\Vert T \Vert \leq \beta^2 $.

\end{proof}

{\bf Remark.}  We note that the equivalence of (ii) and (iii) holds in the range $0<\alpha\le 2$, $\alpha<n$.  If $\alpha=2$ and $n=2$, then we have to assume that the domain $\Omega$ is a Green domain (with non-trivial Green's function), since otherwise 
the inequality fails for constant $h \in L^{1,2}_0 (\Omega)$, unless $\omega=0$. Then the same proof 
works if the Riesz capacity $\text{cap}_{\alpha} (\cdot, \R^2)$  is replaced 
with the Bessel capacity 
$\text{Cap}_{\alpha} (\cdot, \R^2)$.  This case was considered for $\sigma \in L^1_{{\rm loc}} (\omega)$, without providing details,  in 
\cite{FV} 

The same observation applies to the case $\alpha=n$ for $n>2$, where Bessel capacities $\text{Cap}_{\alpha} (\cdot, \R^n)$ can be used if the domain $\Omega$ is $n$-Green. However, we consider here only the case $\alpha<n$.

\vspace{0.1in}

For $\nu \in L^{-\alpha/2,2} (\Omega)$, we say that $u$ is a weak solution of equation (\ref{u0equation}) if $u \in L^{\alpha/2, 2}_0 (\Omega)$ and 
\begin{equation} \label{weakform}
\int_{\R^n} (-\triangle)^{\alpha/4} u  (-\triangle)^{\alpha/4} \varphi \, dx = \int_{\R^n} \tilde{u} \tilde{\varphi} \, d \omega + \langle \nu, \varphi \rangle,
\end{equation} 
for all $\varphi \in L^{\alpha/2, 2}_0 (\Omega)$, where $\tilde{u}$ and $\tilde{\varphi}$  are quasicontinuous representatives of $u$ and $\varphi$, respectively, in $L^{\alpha/2, 2}_0 (\Omega)$ and $\langle \cdot , \cdot \rangle$ is the pairing between $L^{-\alpha/2,2} (\Omega)$ and $L^{\alpha/2, 2}_0 (\Omega)$.  A standard application of the Lax-Milgram Theorem shows that if $\Vert T \Vert <1$, then for each $\nu \in L^{-\alpha/2,2} (\Omega)$, there exists a unique weak solution $u$ of (\ref{u0equation}), as follows.  Following the formulation of the Lax-Milgram Theorem in \cite{E}, \S 6.2.1, let $H= L^{\alpha/2, 2}_0 (\Omega)$ and define the bilinear form
\begin{equation} \label{defB}
 B(u,v) = \int_{\R^n} (-\triangle)^{\alpha/4} u  (-\triangle)^{\alpha/4} v \, dx - \int_{\R^n} \tilde{u} \tilde{v} \, d \omega 
\end{equation} 
on $H \times H$.  Let $\Vert T \Vert^{1/2} = \beta <1$.  Then (\ref{embedd2}) holds, which shows that $\int_{\R^n} \tilde{u} \tilde{\varphi} \, d \omega$ is well-defined, i.e., if $\widetilde{u_1}$ and $\widetilde{u_2}$ are quasicontinuous representatives of the same equivalence class in $L^{\alpha/2, 2}_0 (\Omega)$, then $\widetilde{u_1} = \widetilde{u_2} \,\, \omega$-a.e. By the Cauchy-Schwarz inequality,
\[   |B(u,v)| \leq  \Vert u \Vert_{L^{\alpha/2,2}_0 (\Omega)}   \Vert v \Vert_{L^{\alpha/2,2}_0 (\Omega)} + \Vert  \tilde{u} \Vert_{L^2 (\omega)} \Vert  \tilde{v} \Vert_{L^2 (\omega)} \]
\[\leq (1 + \beta^2) \Vert u \Vert_{L^{\alpha/2,2}_0 (\Omega)}   \Vert v \Vert_{L^{\alpha/2,2}_0 (\Omega)}.  \] 
Equation (\ref{embedd2}) also implies the coercivity of $B$:
\[ B(u,u) = \Vert u \Vert^2_{L^{\alpha/2,2}_0 (\Omega)} - \Vert \tilde{u} \Vert^2_{L^2 (\omega)} \geq (1- \beta^2)\Vert u \Vert^2_{L^{\alpha/2,2}_0 (\Omega)}. \]  
Then the Lax-Milgram Theorem gives, for each $\nu \in L^{-\alpha/2,2} (\Omega)$, the existence of a unique $u \in L^{\alpha/2, 2}_0 (\Omega)$ such that $B(u, \varphi) = \langle \nu, \varphi \rangle$ for all $\varphi \in L^{\alpha/2, 2}_0 (\Omega)$, which is (\ref{weakform}).

\vspace{0.1in}

\begin{proofof}{Proposition \ref{u0thm}}. (A) \,\, First suppose  $\nu \in L^{-\alpha/2, 2} (\Omega)$ is a positive measure.  Then $G\nu \in L^{\alpha/2, 2}_0 (\Omega) $ and $G \nu$ is $\alpha$-quasicontinuous, by Lemma \ref{Gbnded}.  By (\ref{embedd2}) with $\beta = \Vert T \Vert$ and Lemma \ref{Tbetaequiv}, it follows that $G \nu \in L^2 (\omega)$.  Since $\Vert T \Vert <1$, we have $u_0 \equiv \mathcal{G} \nu  = (I-T)^{-1} G \nu \in L^2 (\omega)$.  Then by (\ref{dualform}) with $\beta = \Vert T \Vert$, we have $u_0 \omega \in L^{-\alpha/2, 2} (\Omega)$.  By Lemma \ref{Gbnded} again, we have $G(u_0 \omega) \in L^{\alpha/2, 2}_0 (\Omega)$ and $G(u_0 \omega)$ is quasicontinuous.  Recall that $u_0 = G (u_0 \omega) + G \nu $ holds pointwise at all points, if we allow infinite values.  Since $G(u_0 \omega)$ and $G \nu$ belong to $L^{\alpha/2, 2}_0 (\Omega)$ and are quasicontinuous, we obtain that $u_0 \in L^{\alpha/2, 2}_0 (\Omega)$ and $u_0$ is quasicontinuous.  Recall that by  Sobolev imbedding (as noted in the proof of Lemma \ref{Gbnded}), it follows that $u_0 \in L^{p^*} (\Omega)$, where $p^{*} = 2n/ (n- \alpha)$.  Hence $u_0 < \infty$ a.e., so $u_0$ is a pointwise solution of $u_0 = G (u_0 \omega) + G \nu $.  Now $\Vert G \nu \Vert_{L^{\alpha/2, 2}_0 (\Omega)} = \Vert \nu \Vert_{L^{-\alpha/2, 2} (\Omega)}$ by Lemma \ref{Gbnded}, and, following the estimates in the above results, 
\[ \Vert G (u_0 \omega) \Vert_{L^{\alpha/2, 2}_0 (\Omega)} =  \Vert u_0 \omega \Vert_{L^{-\alpha/2, 2} (\Omega)}  \leq \Vert T \Vert^{1/2} \Vert u_0 \Vert_{L^2 (\omega)} \]
\[ = \Vert T \Vert^{1/2} \Vert (I-T)^{-1} G \nu \Vert_{L^2 (\omega)} \leq \frac{\Vert T \Vert^{1/2} }{1- \Vert T \Vert} \Vert  G \nu \Vert_{L^2 (\omega)} \]
\[ \leq  \frac{\Vert T \Vert }{1- \Vert T \Vert} \Vert G \nu \Vert_{L^{\alpha/2, 2}_0 (\Omega)} =  \frac{\Vert T \Vert }{1- \Vert T \Vert} \Vert  \nu \Vert_{L^{-\alpha/2, 2} (\Omega)}  . \]
Hence
\[  \Vert u_0 \Vert_{L^{\alpha/2, 2}_0 (\Omega)} =  \Vert G(u_0 \omega) + G \nu \Vert_{L^{\alpha/2, 2}_0 (\Omega)}\]
\[  \leq \left(    \frac{\Vert T \Vert }{1- \Vert T \Vert} +1 \right) \Vert  \nu \Vert_{L^{-\alpha/2, 2} (\Omega)} =  \left(   \frac{1 }{1- \Vert T \Vert}  \right) \Vert  \nu \Vert_{L^{-\alpha/2, 2} (\Omega)} .\]

We define $\mathcal{G}\nu$ by linearity when $\nu$ is a linear combination of positive measures in $L^{-\alpha/2, 2}(\Omega)$; then $\mathcal{G} \nu$ is defined q.e. (in fact whenever each term in the sum defining $G \nu$ is finite) and $u_0 = \mathcal{G} \nu \in L^{\alpha/2,2}_0 (\Omega)$ is quasicontinuous.  By Lemma \ref{Gbnded}, we still obtain (\ref{normu0est}), by the same steps, and the equation $u_0= G (u_0 \omega) + G \nu$ holds q.e. and as elements of $L^{\alpha/2,2}_0 (\Omega)$.

Since the linear combinations of positive measures are dense in $L^{-\alpha/2, 2}(\Omega)$, we can extend the map $\mathcal{G}$ to a bounded map (with the same bound) from $L^{-\alpha/2, 2}(\Omega)$ into $L^{\alpha/2, 2}_0 (\Omega)$. As for $G$, for $\mu \in L^{-\alpha/2, 2}(\Omega)$, we further define $\mathcal{G} \mu$ pointwise q.e. to be a quasicontinuous representative of its equivalence class in $L^{\alpha/2, 2}_0 (\Omega)$. To show that $u_0 \equiv \mathcal{G} \nu$ satisfies $u_0 = G(u_0 \omega) + G (\nu)$ for a general $\nu \in L^{-\alpha/2, 2}(\Omega)$, let $\nu_j$ be a sequence of linear combinations of positive measures converging to $\nu$ in $L^{-\alpha/2, 2}(\Omega)$.  Let $u_{0,j} = \mathcal{G} (\nu_j)$.  Then 
\begin{equation}\label{u0j}
 u_{0,j} = G ( u_{0,j} \omega) + G \nu_j ,  
\end{equation}
for each $j$.  By continuity of $G$ and $\mathcal{G}$, we have that $G\nu_j$ converges to $G \nu$, and $u_{0,j} = \mathcal{G} \nu_j $ converges to $\mathcal{G} \nu = u_0$ in $L^{\alpha/2, 2}_0 (\Omega)$.  By (\ref{embedd2}), $u_{0,j}$ converges to $u_0$ in $L^2 (\omega)$, since $u_{0,j} = \mathcal{G} \nu_j$ and $u_0 = \mathcal{G} \nu$ are quasicontinuous (without this convention we would need to replace them with quasicontinuous representatives at this point).  Then by (\ref{dualform}), $u_{0,j} \omega$ converges to $u_0 \omega$ in $L^{-\alpha/2, 2}(\Omega)$.  By the boundedness of $G$, then, $ G ( u_{0,j} \omega)$ converges to $G(u_0 \omega)$ in $L^{\alpha/2, 2}_0 (\Omega)$.  Hence taking the limit as $j \rightarrow \infty$ in (\ref{u0j}), we see that $u_0 = G(u_0 \omega) + G (\nu)$ holds in the sense of equality in  $L^{\alpha/2, 2}_0 (\Omega)$,  hence a.e., and therefore q.e. since both sides of the equation are quasicontinuous.

We now show that $u_0$ is the weak solution of (\ref{u0equation}).  We claim that for any $\mu \in L^{- \alpha/2, 2} (\Omega)$ and any $\varphi \in L^{\alpha/2, 2}_0 (\Omega)$,
\begin{equation} \label{partialweak}
 \int_{\R^n} (-\triangle)^{\alpha/4} G \mu  (-\triangle)^{\alpha/4} \varphi \, dx =  \langle \mu, \varphi \rangle.  
\end{equation}
First suppose $\mu \in L^{- \alpha/2, 2} (\Omega)$ is a finite positive measure on $\Omega$ and $\varphi \in C^{\infty}_0 (\Omega)$.  Let $\mu^{\, \prime} \in L^{- \alpha/2, 2} (\Omega)$ be as in (\ref{bal1})-(\ref{balayagedef}).  Then $I_{\alpha/2} \mu \in L^2 (\R^n)$, and hence $(-\triangle )^{\alpha/4} I_{\alpha} \mu = I_{\alpha/2 }\mu$ by Fourier transform, and similarly for $\mu^{\, \prime}$.  By (\ref{balayagedef}), 
\[  \int_{\R^n} (-\triangle)^{\alpha/4} G \mu  (-\triangle)^{\alpha/4} \varphi \, dx = \int_{\R^n}  I_{\alpha/2} (\mu -  \mu^{\, \prime}) (-\triangle)^{\alpha/4} \varphi \, dx \]
\[ = \int_{\R^n}  I_{\alpha/2} (-\triangle)^{\alpha/4}  \varphi \, d \mu - \int_{\R^n}  I_{\alpha/2} (-\triangle)^{\alpha/4}  \varphi \, d \mu^{\, \prime}\]
\[ = \int_{\R^n} \varphi \, d \mu -  \int_{\R^n} \varphi \, d \mu^{\, \prime} =  \int_{\R^n} \varphi \, d \mu = \langle \mu, \varphi \rangle,\] 
where the intermediate steps are justified via the Fourier transform, and $\int_{\R^n} \varphi \, d \mu^{\, \prime}=0$ because $\mu^{\, \prime}$ is supported in $\Omega^c$.  
Since 
\[ \left|\int_{\R^n} (-\triangle)^{\alpha/4} G \mu  (-\triangle)^{\alpha/4} \varphi \, dx  \right| \leq \Vert G\mu \Vert_{L^{\alpha/2,2}(\Omega)} \Vert \varphi \Vert_{L^{\alpha/2,2}(\Omega)} ,\] 
by the Cauchy-Schwarz inequality, we can extend (\ref{partialweak}) to all $\varphi \in L^{\alpha/2,2}(\Omega)$ by applying (\ref{partialweak}) to a sequence $\{\varphi_j\}_{j=1}^{\infty}$ of elements of $C^{\infty}_0 (\Omega)$ converging to $\varphi$ in $L^{\alpha/2,2}(\Omega)$ and letting $j \rightarrow \infty$.  

Next, suppose $\mu\in L^{- \alpha/2, 2} (\Omega)$ is a positive measure, not necessarily finite.  Define $\{\mu_j\}_{j=1}^{\infty}$ as in the proof of Lemma \ref{Gbnded}.  In that proof, we saw that each $\mu_j$ is finite, and $\mu_j \rightarrow \mu$ in the norm on $L^{- \alpha/2, 2} (\Omega)$.  Hence $G\mu_j$ converges to $G \mu$ in $L^{ \alpha/2, 2}_0 (\Omega)$.  Therefore, applying (\ref{partialweak}) to $\mu_j$ and taking the limit, we obtain (\ref{partialweak}) for $\mu$ and all $\varphi \in L^{\alpha/2,2}(\Omega)$.  This result then extends to linear combinations of positive measures in $L^{- \alpha/2, 2} (\Omega)$.  Then, by Deny's Theorem again, such linear combinations are dense in $L^{- \alpha/2, 2} (\Omega)$, so another passage to the limit implies (\ref{partialweak}) for all $\nu \in L^{- \alpha/2, 2} (\Omega)$ and all $\varphi \in L^{\alpha/2,2}(\Omega)$.

Now for $\nu \in L^{- \alpha/2, 2} (\Omega)$, we have that $u_0= \mathcal{G} \nu \in L^{\alpha/2, 2}_0 (\Omega)$ is quasicontinuous.  Then $u_0 \in L^2 (\omega)$, by Lemma \ref{Tbetaequiv} (iii).  Then by Lemma \ref{Tbetaequiv} (iv), $u_0 \omega \in L^{- \alpha/2,2}(\Omega)$.  We also have that $u_0 = G(u_0 \omega + \nu)$ as elements of $L^{\alpha/2, 2}_0 (\Omega)$. By (\ref{partialweak}),
\[  \int_{\R^n} (-\triangle)^{\alpha/4} u_0  (-\triangle)^{\alpha/4} \varphi \, dx  \]
\[ =  \int_{\R^n} (-\triangle)^{\alpha/4} G(u_0 \omega + \nu)  (-\triangle)^{\alpha/4} \varphi \, dx  =  \langle u_0 \omega + \nu, \varphi \rangle .\]
Since $u_0 \in L^2(\omega)$, we have $u_0^+, u_0^- \in L^2 (\omega)$.  By Lemma \ref{Tbetaequiv} (iv), we obtain $u_0^+ \omega, u_0^- \in L^{- \alpha/2, 2} (\Omega)$, hence $|u_0 \omega| \in L^{- \alpha/2, 2} (\Omega)$.  This allows us to conclude that 
\[ \langle u_0 \omega , \varphi \rangle = \int_{\Omega} u_0 \tilde{\varphi} \, d \omega  = \int_{\Omega} \widetilde{u_0} \tilde{\varphi} \, d \omega \]
for all $\varphi \in L^{\alpha/2, 2}_0 (\Omega)$, because $\widetilde{u_0} =u_0 $ since $u_0$ is quasicontinuous. Therefore $u_0$ is the weak solution of (\ref{u0equation}).

\vspace{0.1in}

(B) \,\, Now suppose that (\ref{intformulation}) has a non-negative solution $u$ for some non-trivial positive measure $\nu$ (i.e., $\nu (\Omega)>0$). Since $\nu$ is nontrivial, there exists a compact subset $K$ of $\Omega$ such that $\nu (K)>0$, hence by (\ref{Harnackconsequence}), $G \nu (x) \geq G\nu_K (x) >0$.  Since $Tu \geq 0$, we get that $u \geq G\nu >0$ on $\Omega$.  So $0 < u < \infty$ a.e. on $\Omega$ and satisfies $u = Tu + G\nu$, hence $Tu \leq u$ on $\Omega$.  Schur's Lemma for integral operators implies that $\Vert T \Vert \leq 1$.  

Now suppose also that $u=G(u \omega+ \nu) \in L^{\alpha/2, 2}_0 (\Omega)$. Then $\int_\Omega G (u \omega + \nu)  \, ( u\, d  \omega + d\nu) <\infty$, by Lemma \ref{Gbnded}. Consequently 
$\int_\Omega G \nu \, d \nu <\infty$.  Hence $\nu \in L^{-\alpha/2, 2} (\Omega)$, by Lemma \ref{Gbnded} again.
\end{proofof}

\bgp

\section{Proofs of Theorems \ref{nu=1thm}, \ref{u1thm}, and \ref{example}}

For the remainder of this paper, we assume that $\Omega$ is a bounded $C^{1,1}$ domain.  For such domains, 
the following estimates for $G$ are obtained in \cite{Ku} and \cite{CS}:
\begin{equation} \label{Gequivest}
G(x,y) \approx \frac{\delta(x)^{\alpha/2} \, \delta(y)^{\alpha/2}}{|x-y|^{n - \alpha} (|x-y| + \delta (x) + \delta(y))^{\alpha} },
\end{equation}
where ``$\approx$" means that the ratio of the left and right sides is bounded above and below by positive constants depending only on $\alpha$ and $\Omega$.  A useful consequence of (\ref{Gequivest}) is:
\begin{equation} \label{G1equiv}
G1 (x) \approx \delta (x)^{\alpha/2} .
\end{equation}
The lower bound follows because the quantities in the denominator of the right side of (\ref{Gequivest}) are bounded above on a bounded domain, so $G1 (x) \geq c \delta (x)^{\alpha/2} \int_{\Omega} \delta(y)^{\alpha/2} \, dy$.  The upper bound is similar: by (\ref{Gequivest}), we have $G (x,y) \leq c \delta(x)^{\alpha/2} |x-y|^{-n + \alpha/2}$;
then $\sup_{x \in \Omega} \int_{\Omega} |x-y|^{-n + \alpha/2} \, dy < \infty$ since $\Omega$ is bounded.

As for the classical Laplacian considered in \cite{FV}, our results are based on the estimates in \cite{FNV} for quasi-metric kernels.  We refer to \cite{FNV} (or the summary in \S 3 of \cite{FV}) for the definitions and details. The equivalence (\ref{Gequivest}) shows that for $m(x) = \delta (x)^{\alpha/2}$, the kernel $K(x,y) = \frac{G(x,y)}{m(x)m(y)}$ is a quasi-metric kernel.  Let $v_0 = \sum_{j=0}^{\infty} T^j m$.  Then Corollary 3.5 of \cite{FNV} states that there exists $c>0$ depending only on the quasimetric constant of $K$ such that 
\begin{equation}\label{v0lowerbnd}
v_0 \geq m e^{c Tm /m}.
\end{equation}
If we assume in addition that $\Vert T \Vert <1$, the same result states that there exists $C>0$ depending only on $\Vert T \Vert$ and the quasi-metric constant of $K$ such that 
\begin{equation} \label{v0upperbnd}
v_0 \leq m e^{C Tm /m}.
\end{equation}

\begin{proofof}{Theorem \ref{nu=1thm}}. 
(i) First suppose that $\Vert T \Vert <1$. Since $\nu \in L^{-\alpha/2, 2}(\Omega)$ (see the remarks in the Introduction before the statement of Theorem \ref{nu=1thm}), by Proposition \ref{u0thm} we have $u_0 = \mathcal{G} \nu \in L^{\alpha/2,2}_0 (\Omega)$ and $u_0$ is a solution of $u= G(u \omega) + G1$.  Because of (\ref{G1equiv}) we obtain $u_0 = \sum_{j=0}^{\infty} T^j G1  \approx \sum_{j=0}^{\infty} T^j m = v_0$, so the estimate (\ref{u0upperestimate}) follows from (\ref{v0upperbnd}). 

(ii) As in \cite{FV}, p. 1405, $u_0$ is the minimal positive solution of (\ref{intformulation}).  Hence $u \geq u_0$.  Since $u_0 \approx v_0$,  (\ref{ulowerestimate}) follows from (\ref{v0lowerbnd}).   

\end{proofof}

Turning to equation (\ref{u1equation}), we first recall that for $v=u-1$, equation (\ref{u1equation}) becomes $(-\triangle)^{\alpha/2} v = \omega v + \omega$ on $\Omega$ with $v=0$ on $\Omega^c$, which is equation (\ref{u0equation}) with $\nu$ replaced by $\omega$.  Therefore if we assume $\omega \in L^{-\alpha/2,2}(\Omega)$ and $\Vert T \Vert <1$, we obtain $v_1 = \mathcal{G} \omega \in L^{\alpha/2,2}_0(\Omega)$, and $v_1$ is the unique weak solution guaranteed by the Lax-Milgram Theorem. However, the Lax-Milgram Theorem applies only when $\omega \in L^{-\alpha/2,2}(\Omega)$, whereas the integral formulation (\ref{intformulationu1}) allows us to consider more general $\omega$.  In Remark \ref{last} we give an example where the integral equation holds a.e., but $\omega \not\in L^{-\alpha/2, 2} (\Omega)$. 

As in the case $\alpha =2$ in \cite{FV}, the functions $u_0 = \mathcal{G} 1$ and $u_1 = 1 + \mathcal{G} \omega$ are related by the identity
\begin{equation} \label{u0u1relation}
\int_{\Omega} u_1 \, dx = \int_{\Omega} 1 \, dx + \int_{\Omega} \int_{\Omega} \mathcal{G} (x,y) \, d \omega (y) \, dx = |\Omega| + \int_{\Omega} u_0 \, d \omega, 
\end{equation}
using the symmetry of $\mathcal{G}$ and Fubini's theorem.  

We make some remarks about the Poisson kernel.  For bounded domains with the outer cone property (in particular, bounded Lipschitz domains; see \cite{BBK}, pp. 16-17, or \cite{CS}, p. 468) the Poisson kernel of order $\alpha$ satisfies $\int_{\Omega^c} P(x,y) \, dy =1$ for all $x \in \Omega$ and can be written
\[  P(x,y) = P^{(\alpha)} (x,y)= A_{\alpha, n} \int_{\Omega} \frac{ G(x,z)}{|y-z|^{n + \alpha}} \, dz , \]
for $x \in \Omega$ and $y \in \Omega^c$,  where $A_{\alpha, n}$ is a constant.  For $x \in \Omega$, define 
\begin{equation} \label{defphi}
  \phi (x) =  A_{\alpha, n}  \int_{\Omega^c} \frac{1}{|x-z|^{n + \alpha} } \, dz. 
\end{equation}
Hence  
\begin{equation} \label{poissonid}
G \phi (x) = \int_{\Omega^c} P(x,y) \, dy =1 \,\, \mbox{for all} \,\, x \in \Omega.
\end{equation}
 We note that there exist positive constants $c(\alpha, \Omega)$ and $C(\alpha, n)$ such that 
\begin{equation} \label{phiequiv}
  c(\alpha, \Omega) \frac{1}{\delta (x)^{\alpha}} \leq \phi (x) \leq c_n A_{\alpha, n} \alpha^{-1} \frac{1}{\delta (x)^{\alpha}}, \,\, \mbox{for all} \,\, x \in \Omega,  
\end{equation}
where $\delta (x)$ is the distance from $x$ to $\Omega^c$.  The upper bound in (\ref{phiequiv}) is elementary, whereas the lower bound follows because $C^{1,1}$ domains (in fact NTA domains) have the property that there are constants $c>0$ and $r_0>0$ such that $|B(y,r) \cap \Omega^c| \geq c |B(y,r)|$, for all $y \in \partial \Omega$ and $0<r< r_0$.  

We will also use the well-known equivalence (\cite{CS}, Theorem 1.5)
\begin{equation}\label{Poissonest}
P(x,y) \approx \frac{\delta(x)^{\alpha/2}}{\delta(z)^{\alpha/2} (1+ \delta(z))^{\alpha/2} |x-z|^n},
\end{equation}
where here $\delta (z) = $ dist $(z, \partial \Omega)$, with equivalence constants independent of $x$ and $y$.

\vspace{0.1in}

\begin{proofof}{Theorem \ref{u1thm}}. First suppose $0< \alpha <1$.  By (\ref{poissonid}), $G \phi = \chi_{\Omega}$.  By (\ref{phiequiv}),we have 
\[ \int_{\Omega} G \phi (x) \phi(x) \, dx  = \int_\Omega \phi(x) \, dx \le  C(\alpha, n)  \int_\Omega \delta (x)^{-\alpha}\, dx <+\infty \]
for $0<\alpha<1$, 
for a broad class of domains $\Omega$ (e.g., Ahlfors regular domains; in particular, bounded Lipschitz domains). By Lemma \ref{Gbnded},  
$\chi_\Omega\in L^{\alpha/2, 2}_0 (\Omega)$ for all $0<\alpha<1$. Then by (\ref{embedd2}), $\chi_{\Omega} \in L^2 (\omega)$, or $\omega (\Omega) < \infty$; i.e., $\omega$ is a finite measure.  By Theorem \ref{u0thm} and (\ref{embedd2}), $u_0 \in L^2 (\omega)$, so by the finiteness of $\omega$, we have $u_0 \in L^1 (\omega)$.   Thus by (\ref{u0u1relation}), $u_1 \in L^1 (\Omega, dx)$. 
  
  Notice that $G \omega= T(\chi_{\Omega})$, hence
\[ \int_\Omega G \omega \, d\omega \leq \omega(\Omega)^{1/2} \Vert T(\chi_{\Omega}) \Vert_{L^2(\omega)} \leq \beta^2 \omega(\Omega) < \infty.  \]  
Therefore $\omega \in L^{- \alpha/2, 2} (\Omega)$, by Lemma \ref{Gbnded}.  By Theorem \ref{u0thm}, 
$u_1-1 = \mathcal{G} \omega \in L^{\alpha/2, 2}_0 (\Omega)$.

Now suppose $1 \leq \alpha < 2$.  By (\ref{poissonid}), for any non-negative Borel measure $\nu$ on $\Omega$, we have 
\[  \nu(\Omega) = \int_{\Omega} G \nu (x) \, \phi (x) \, dx ,  \]
by Fubini's theorem.  Applying this fact with $d \nu = u_0 d \omega$, where $u_0$ is defined by (\ref{defu0}) and satisfies $u_0 = G(u_0 \omega) + G1$, yields 
\begin{equation} \label{identityu0omega}
  \int_{\Omega} u_0  \, d \omega = \int_{\Omega} G(u_0 \, \omega) \phi \, dx = \int_{\Omega} u_0 \phi \, dx - \int_{\Omega} G1 \cdot \phi \, dx . 
\end{equation}
By (\ref{G1equiv}) and (\ref{phiequiv}), $G1 \cdot \phi \approx \delta^{-\alpha/2}$, and hence $\int_{\Omega} G1 \cdot \phi \, dx < \infty$.  Our goal is to show that $\int_{\Omega} u_0 \phi \, dx  < \infty$ for $\Vert T \Vert$ sufficiently small. 

Recall that if $\Vert T \Vert <1$, then for $m = \delta^{\alpha/2}$, we have
\[ u_0  \leq C_1 m e^{C Tm /m} , \]
by Theorem \ref{nu=1thm}, where $C= C(\Omega, \alpha, \Vert T \Vert)$.  Choose and fix $p > \frac{2+\alpha}{2 - \alpha}$, which guarantees that $\frac{\alpha (p+1)}{2 (p-1)} <1$.  Let $C_2 = C(\Omega, \alpha, 1/p)$; that is, $C_2$ is the constant $C$ when $\Vert T \Vert = 1/p$.  Let $c$ be the constant from (\ref{ulowerestimate}); note that $c \leq C_2$ (e.g., by (1.4) in \cite{FNV}).  Define
\[   \gamma = \frac{c}{C_2 p}  .\]
Note that $\gamma \leq 1/p$.  Now suppose $\Vert T \Vert < \gamma$.  Then $u_0 \leq C_1 m e^{C_2 Tm/m}$. Hence, by H\"{o}lder's inequality (using (\ref{phiequiv}))
\[  \int_{\Omega} u_0 \phi \, dx \leq C(\alpha, n) \int_{\Omega} \frac{u_0}{m^2} \, dx \leq C_1 C(\alpha, n) \int_{\Omega} \frac{1}{m} e^{C_2 Tm/m} \, dx \]
\[ \leq C_1 C(\alpha, n) \left( \int_{\Omega} m^{-\frac{p+1}{p-1}} \, dx \right)^{1- \frac{1}{p}} \left(\int_{\Omega} m e^{C_2 p Tm/m} \, dx  \right)^{1/p} . \]
Since $m^{-\frac{p+1}{p-1}} = \delta^{\frac{\alpha (p+1)}{2 (p-1)}}$, the first integral on the previous line is finite. To show that the second integral is finite, let $\omega_1  = \gamma^{-1} \omega$.  We apply Theorem \ref{nu=1thm} with $\omega_1$ in place of $\omega$, but with $G$ and $m$ unchanged.  
Define $T_1 = \gamma^{-1}T$; note that $T_1 f (x) = G(\omega_1 f)$, and  
\[ \Vert T_1 \Vert_{L^2 (\omega_1) \rightarrow L^2 (\omega_1)} =  \Vert T_1 \Vert_{L^2 (\omega) \rightarrow L^2 (\omega)} = \gamma^{-1} \Vert T \Vert_{L^2 (\omega) \rightarrow L^2 (\omega)} <  1 .\]
Define $u_0^* = \sum_{j=0}^{\infty} T_1^j G1 $.  By Theorem \ref{nu=1thm},  $c_1 m e^{c T_1 m/m } \leq  u_0^*$ and $ u_0^* \in L^{\alpha/2, 2}_0 (\Omega) \subseteq L^{p^*} (\Omega,dx) \subseteq$ $L^1 (\Omega, dx)$ (since $\Omega$ is bounded), for $p^* = 2n/(n- \alpha)$.  But $C_2 pTm = c  T_1 m$, so
\[ \int_{\Omega} m e^{C_2 p Tm/m} \, dx  = \int_{\Omega} m  e^{c T_1 m/m } \, dx \leq c_1^{-1} \int_{\Omega}  u_0^* \, dx < \infty.  \]
We have shown $\int_{\Omega} u_0 \phi \, dx <\infty$, hence $u_0 \in L^1 (\omega)$, by (\ref{identityu0omega}).  By (\ref{u0u1relation}), we have $u_1 \in L^1 (\Omega, dx)$.

We now turn to the pointwise bounds (\ref{u1pointwiseupper}) and (\ref{u1pointwiselower}).  Their proofs are similar to the proofs of (1.12) and (1.14) in \cite{FV}.  By the same argument as on p. 1413 of \cite{FV}, using (\ref{poissonid}) we have
\begin{equation} \label{newu1iden}
 u_1 (x) = \int_{\Omega^c} \sum_{j=0}^{\infty} T^j (P(\cdot, z))(x)  \, dz.  
 \end{equation} 
Define the quasi-metric 
\[ d(x,y) = |x-y|^{n-\alpha} [ |x-y|^2 + \delta(x)^2 + \delta(y)^2 ]^{\alpha/2} , \,\,\, x,y \in \R^n . \]
Note that for $x \in \Omega$ and $z \in \Omega^c$, we have $d(x,z) \approx |x-z|^n$, because $\delta (x), \delta(z) \leq |x-z|$.
Momentarily fixing $z \in \overline{\Omega}^{\, c}$, we let $m(x) = P(x,z)$.  By (\ref{Gequivest}), it follows that 
\[ K(x,y) \equiv \frac{G(x,y)}{m(x) m(y)} \approx c(z)  \frac{d(x,z) d(y,z)}{d(x,y)},  \]
where $c(z) = \delta(z)^{\alpha} (1+ \delta(z))^{\alpha}$.  From a lemma due to Hansen and Netuka (\cite{HN}), quoted as Lemma 3.4 in \cite{FV}, it follows that $K(x,y)$ is a quasi-metric kernel on $\Omega$ with quasi-metric constant independent of $z$.  By Corollary 3.5 in \cite{FNV} (essentially (\ref{v0upperbnd})), 
\[   \sum_{j=0}^{\infty} T^j (P(\cdot, z))(x)  \leq C_3 P(x,z) e^{C_4 \int_{\Omega} G(x,y) \frac{P(y,z)}{P(x,z)}\, d \omega (y)  }   ,   \]
with constants independent of $z$.  Using (\ref{newu1iden}), then, we obtain (\ref{u1pointwiseupper}). We also have the lower estimate (\ref{u1pointwiselower}) for $u_1$, again by Corollary 3.5 in \cite{FNV}.  Since $u_1$ is the minimal positive solution of $u=1+G(u\omega)$, we obtain (\ref{u1pointwiselower}) for $u$ as well.

\end{proofof}

\vspace{0.1in}

\begin{proofof}{Theorem \ref{example}}.  Let $d \omega = \phi \, dx$, where $\phi $ is defined by (\ref{defphi}).  By (\ref{poissonid}), 
\[  T1 (x) = G \phi (x) =1 \, \, \mbox{for all} \,\, x \in \Omega.  \] 
Hence $T^j 1 =1$ for all $j=1, 2, \dots$, so $u_1 = 1 + \sum_{j=1}^{\infty} T^j G \phi \equiv + \infty$ on $\Omega$. 

It follows (see e.g., \cite{DPV}, Proposition 3.4 and the proof of Lemma 5.1) from Plancherel's theorem that for, say, $u \in C^{\infty}_0 (\Omega)$, 
\[ \Vert u \Vert_{L^{\alpha/2, 2}_0 (\Omega)}^2 = \Vert u \Vert_{L^{\alpha/2, 2}_0 (\R^n)}^2 = \frac{A_{\alpha, n}}{2} \int_{\R^n} \int_{\R^n} \frac{|u(x)-u(y)|^2}{|x-y|^{n + \alpha}} \, dx \, dy \]
\[  =  \frac{A_{\alpha, n}}{2} \int_{\Omega} \int_{\Omega} \frac{|u(x)-u(y)|^2}{|x-y|^{n + \alpha}} \, dx \, dy + \int_{\Omega} u^2(x) \phi (x) \, dx ,  \]
where $A_{\alpha, n}$ is the constant from (\ref{phiequiv}).
The following version of Hardy's inequality holds for bounded Lipschitz domains 
if $1 < \alpha <2$ (\cite{Dy}, Theorem 1.1):  
\[ \int_{\Omega}u^2  \delta^{-\alpha} \, dx \leq C_1 (\alpha,\Omega) \int_{\Omega} \int_{\Omega} \frac{|u(x)-u(y)|^2}{|x-y|^{n + \alpha}} \, dx \, dy, \,\, \mbox{for all} \,\, u \in C^{\infty}_0 (\Omega). \]
Recalling (\ref{phiequiv}), we obtain 
\[  \int_{\Omega}u^2  \phi  \, dx \leq \frac{c_n A_{\alpha, n}}{\alpha} \int_{\Omega}u^2  \delta^{-\alpha} \, dx  \leq \frac{c_n  C_1 (\alpha, \Omega)A_{\alpha, n}}{\alpha} \int_{\Omega} \int_{\Omega} \frac{|u(x)-u(y)|^2}{|x-y|^{n + \alpha}} \, dx \, dy.  \]
Therefore
\[ \left(1 + \frac{\alpha}{2c_n C_1 (\alpha, \Omega) } \right) \int_{\Omega} u^2 \phi \, dx \leq \Vert u \Vert_{L^{\alpha/2, 2}_0 (\Omega)}^2 , \,\, \mbox{for all} \,\, u \in C^{\infty}_0 (\Omega). \]
Hence, for $d\omega = \phi \, dx$, (\ref{embedd}) holds with $\beta=\left(1 + \frac{\alpha}{2c_nC_1 (\alpha, \Omega)}\right)^{-1/2}<1$. By Lemma \ref{Tbetaequiv}, we have $\Vert T \Vert \leq \beta$.
\end{proofof}

By \cite{FMT}, p. 115, for convex domains $\Omega$ the constant $C_1(\alpha, \Omega)$ depends only on $\alpha$ and the dimension $n$, and the value 
\[ C_1(\alpha, \Omega) = \frac{\alpha\Gamma (\frac{n + \alpha}{2})}{2^{2-\alpha}\pi^{\frac{n-2}{2}} \Gamma(1 - \frac{\alpha}{2})\Gamma^2 (\frac{\alpha +1}{2})}  \] 
is sharp.  In that case, as $\alpha \rightarrow 2^-$, we have $C_1 (\alpha, \Omega) \rightarrow 0$, so in the above proof, $\beta \rightarrow 0$.  Consequently, the value of $\gamma$ in Theorem \ref{u1thm} must converge to $0$ as $\alpha \rightarrow 2^-$.

\begin{Rem} \label{last} We observe that, for  $d \omega = \gamma \, \phi \, dx$, where $\phi $ is defined by (\ref{defphi}) as above and $\gamma\in(0,1)$, 
 we  have  $u_1(x)=1/(1-\gamma)$ in $\Omega$,  and $u_1(x)=1$ in $\Omega^c$. Then $u_1= G(u_1 \, \omega)+1$ in $\R^n$, and  $(-\Delta)^{\alpha/2} u_1 = \omega \, u_1$ in $D'(\Omega)$. 
 However, in contrast to the case $0<\alpha<1$, 
we have $\omega \not\in L^{-\alpha/2, 2}(\Omega)$ and $u_1-1 =\mathcal{G} \omega\not\in L^{\alpha/2, 2}_0 (\Omega)$ for all $1\le \alpha <2$;  in fact,  
 $u_1$ obviously does not have a quasi-continuous representative in $\R^n$ since ${\rm cap}_\alpha(\partial \Omega)>0$  in this case.
\end{Rem}

\end{document}